\newtheorem{theorem}{Theorem}[section]
\newtheorem{lemma}[theorem]{Lemma}
\newtheorem{corollary}[theorem]{Corollary}
\newtheorem{definition}[theorem]{Definition}
\newtheoremstyle{example}{\topsep}{\topsep}%
     {}
     {}
     {\bfseries}
     {.}
     {.5em}
     {\thmname{#1}\thmnumber{ #2}\thmnote{ #3}}
\theoremstyle{example}
\newtheorem{remark}[theorem]{Remark}
\newtheorem{example}[theorem]{Example}
\newcommand{\set}[1]{\left\{ #1\right\}}
\renewcommand{\vec}[1]{\textbf{#1}}
\begin{document}

\title{Weight recursions for any rotation symmetric Boolean functions}
\author{Thomas W. Cusick \footnote{University at Buffalo, Buffalo, NY, USA;  e-mail: cusick@buffalo.edu}}
\date{}

\maketitle
\begin{abstract}
Let $f_n(x_1, x_2, \ldots, x_n)$ denote the algebraic normal form (polynomial form) of a rotation symmetric
Boolean function of degree $d$ in $n \geq d$ variables and let $wt(f_n)$ denote the Hamming weight of this function.
Let $(1, a_2, \ldots, a_d)_n$ denote the function $f_n$ of degree $d$ in $n$ variables generated by the monomial
$x_1x_{a_2} \cdots x_{a_d}.$ Such a function $f_n$ is called {\em monomial rotation symmetric} (MRS). It was proved in a $2012$ paper that for any MRS $f_n$ with $d=3,$ the sequence of weights $\{w_k = wt(f_k):~k = 3, 4, \ldots\}$ satisfies a homogeneous linear recursion with integer coefficients.  In this paper it is proved that such recursions
exist for any rotation symmetric function $f_n;$ such a function is generated by some sum of $t$ monomials of various degrees.  The last section of the paper gives a Mathematica program which explicitly computes the homogeneous linear  recursion for the weights, given any rotation symmetric $f_n.$   The reader who is only interested in finding some recursions can use the program and not be concerned with the details of the rather complicated proofs in this paper.   
\end{abstract}

{\bf Keywords:} Boolean function, rotation symmetric, Hamming weight, recursion.


\section{Introduction}
\label{intro} 
If we define $V_n$ to be the vector space of dimension $n$ over the
finite field  $GF(2) = \set{0, 1}$, then an $n$ variable Boolean function  $f(x_1, x_2, ..., x_n)
= f(\vec{x})$ is a map from $V_n$ to $GF(2)$.  Every Boolean function $f(\vec{x})$ has a unique
polynomial representation (usually called the {\em algebraic normal form} \cite[p. 6]{CBF}),
and the {\em degree} of  $f$ (notation $deg ~f$) is the degree of this polynomial.
A function of degree $\leq 1$ is \emph{affine}, and if the constant term is 0, then the function is \emph{linear}.
We let  $B_n$ denote the set of all Boolean functions in $n$ variables, with addition
and multiplication done $\bmod ~{2}.$  When addition $\bmod~{2}$ is clear from the context we use $+,$ 
but if we wish to emphasize the fact that addition is being done $\bmod~{2}$ we will use 
$\oplus.$ We also use $\oplus$ for the xor addition of two binary $m$-tuples.  We use 
$a \| b$ to denote the concatenation of two strings $a$ and $b.$

If we list the $2^n$ elements of $V_n$ as $v_0 = (0,\ldots,0), v_1 = (0,\ldots,0,1), \ldots$ in
lexicographic order, then the $2^n$-vector $(f(v_0), f(v_1),\ldots,f(v_{2^n - 1}))$ is
called the {\em truth table} of $f$.
The {\em weight} (also called Hamming weight)  $wt(f)$
of $f$ is defined to be the number of 1's in the truth table for $f$.
In many cryptographic uses of Boolean functions, it is important that the truth table of each function  $f$  has an equal number of 0's and 1's; in that case, we say that the function $f$  is {\em balanced}.

We define a cyclic permutation $\rho$ on $n$ variables by 
$\rho(x_1, x_2, \cdots, x_n)=(x_2, x_3, \cdots, x_n, x_1)$. Then a Boolean function $f(x)$ in $n$ variables, where $x=(x_1, x_2, \cdots, x_n)$, is  \emph{rotation symmetric} if $f(x)=f(\rho(x))$ for all $x \in V_n$.  A Boolean function is \emph{monomial rotation symmetric} (MRS) if it is rotation symmetric and generated by a single monomial.  In \cite{PQ} Piepryzyk and Qu showed that rotation symmetric Boolean functions are useful  in cryptography for designing fast hash functions.  Since then, further applications of these functions in cryptography and coding theory have been found (many references for this are in \cite{CBF}), so much attention has been given to rotation symmetric Boolean functions.  

A summary of the work on rotation symmetric functions is given in \cite[Chapter 6]{CBF}.
This paper is concerned with the following theorem.  We need some notation first.  We use the notation 
$(1, a_2, \ldots, a_d)_n$ for the monomial rotation symmetric function
$f(x_1, x_2, \ldots, x_n)$ of degree $d$ in $n$ variables generated by the monomial
$x_1x_{a_2} \cdots x_{a_d}.$

\begin{theorem}
\label{thm1}
Let $f_k(x_1, x_2, \ldots, x_k)$ denote any rotation symmetric function in $k$ variables generated by the sum of $t$ monomials 
$x_1x_{a_{2,i}} \cdots x_{a_{d(i),i}}, ~ 1 \leq i \leq t,$ where $d(i)$ is the degree of the $i$-th monomial.  Then the sequence of weights $\{w_k = wt(f_k):~k = max~ d(i),  max~ d(i) +1, \ldots\}$
satisfies a homogeneous linear recursion with integer coefficients.  
\end{theorem}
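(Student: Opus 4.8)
The plan is to express $wt(f_k)$, for all sufficiently large $k$, as a fixed linear combination of the entries of powers of a single matrix that does not depend on $k$, and then to read off the recursion from the Cayley--Hamilton theorem. First I would \emph{localize} $f_k$. Put $m = \max_i a_{d(i),i}$; this is a constant, independent of $k$. Since every generating monomial uses only variables with indices in $\{1,2,\ldots,m\}$, for every $k \geq m$ the function $f_k$ can be written (with all subscripts read modulo $k$) as
\[
f_k(x_1,\ldots,x_k) \;=\; \bigoplus_{j=0}^{k-1} g\bigl(x_{j+1},x_{j+2},\ldots,x_{j+m}\bigr),
\]
where $g(y_1,\ldots,y_m) = \bigoplus_{i=1}^{t} y_1 y_{a_{2,i}}\cdots y_{a_{d(i),i}}$ is one fixed Boolean function of $m$ variables. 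Thus the number $t$ of monomials and their individual degrees enter only through this single ``local rule'' $g$.

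Next comes the key step, a trace formula for the weight. Using $[z=1] = \tfrac12 - \tfrac12(-1)^{z}$ for $z\in\{0,1\}$, applied with $z = f_k(x)$ and the multiplicativity of $(-1)^{(\cdot)}$ over the $\oplus$-sum above, one obtains
\[
wt(f_k) \;=\; 2^{k-1} \;-\; \frac12 \sum_{x\in\{0,1\}^{k}}\ \prod_{j=0}^{k-1} (-1)^{\,g(x_{j+1},\ldots,x_{j+m})}.
\]
The inner object is a cyclic product of local factors whose supports overlap in $m-1$ consecutive coordinates, so it is a transfer-matrix quantity. Let $M$ be the $2^{m-1}\times 2^{m-1}$ matrix indexed by binary strings of length $m-1$, with entry $M[\sigma,\tau] = (-1)^{g(\sigma_1,\ldots,\sigma_{m-1},\tau_{m-1})}$ when $\sigma$ and $\tau$ overlap consistently (the last $m-2$ bits of $\sigma$ equal the first $m-2$ bits of $\tau$) and $M[\sigma,\tau]=0$ otherwise. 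By identifying each $x\in\{0,1\}^{k}$ with the cyclic sequence of its length-$(m-1)$ windows, I expect to prove that for all $k\geq m$,
\[
\sum_{x\in\{0,1\}^{k}}\ \prod_{j=0}^{k-1} (-1)^{\,g(x_{j+1},\ldots,x_{j+m})} \;=\; \mathrm{tr}\bigl(M^{k}\bigr),
\]
so that $wt(f_k) = 2^{k-1} - \tfrac12\,\mathrm{tr}(M^{k})$ for every $k\geq m$. (When $m=1$ the matrix is $1\times 1$ and the identity is immediate.)

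It then remains to extract the recursion. The matrix $M$ has integer entries (indeed entries in $\{-1,0,1\}$), so its characteristic polynomial $\chi_M$ lies in $\mathbb{Z}[x]$; by Cayley--Hamilton the power sums $\mathrm{tr}(M^{k})$ satisfy the homogeneous linear recursion with characteristic polynomial $\chi_M$, while the sequence $2^{k-1}$ satisfies the one with characteristic polynomial $x-2$. Hence $wt(f_k)$ satisfies the homogeneous linear recursion with integer coefficients whose characteristic polynomial is $(x-2)\chi_M(x)$, valid for all $k\geq m$. Finally, since the target sequence $\{w_k : k\geq \max_i d(i)\}$ agrees with an integer-linear-recurrent sequence from some point on, its generating function is a ratio of polynomials in $\mathbb{Z}[z]$, and a routine argument promotes this to a single homogeneous linear recursion with integer coefficients valid for the whole sequence, as claimed. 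The Mathematica program of the last section is then just the computation of $M$ and of $(x-2)\chi_M(x)$.

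The main obstacle is Step~2: one must pin down the correct state space (overlapping windows of width $m-1$), verify that the window-to-window transition weights are exactly encoded by $M$, and confirm that the correspondence between inputs $x$ and cyclic window sequences is a bijection onto the \emph{consistent} sequences (the inconsistent ones contributing $0$), which is precisely what turns the cyclic product into $\mathrm{tr}(M^{k})$. One must also take some care with the finitely many small values $\max_i d(i)\leq k<m$, for which a width-$m$ window would wrap around onto itself; these are harmless since they are absorbed by the generating-function step at the end. Everything else is routine bookkeeping.
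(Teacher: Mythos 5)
Your proof takes a genuinely different route from the paper. The paper works directly with truth tables: it decomposes $f_k$ into truncated $g$-functions and leftover monomials $h_j,\eta_j,\tilde h_j$, tracks how the auxiliary ``$m$-actions'' split when a truth table is halved, assembles a \emph{rules matrix} $A$ of size $2^{\sum_i (a_{d(i),i}-1)}+1$ recording those splits, and extracts the recursion from the minimal polynomial of $A$. You instead localize $f_k$ as a cyclic XOR of one fixed rule $g$ on windows of width $m=\max_i a_{d(i),i}$, convert the weight to a Walsh-type sum via $wt(f_k)=2^{k-1}-\tfrac12\sum_x(-1)^{f_k(x)}$, and evaluate that sum as $\mathrm{tr}(M^k)$ for a transfer matrix $M$ of size $2^{m-1}$, so that $(x-2)\chi_M(x)$ furnishes the recursion by Cayley--Hamilton. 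This is correct in outline and buys several things the paper's argument does not: a much smaller matrix (exponential in the maximum top index rather than in the sum of top indices over all $t$ monomials), a uniform treatment of arbitrarily many monomials of arbitrary degrees with no case analysis on splitting rules, and an exact closed form $wt(f_k)=2^{k-1}-\tfrac12\,\mathrm{tr}(M^k)$ which immediately explains why the weights can be recovered from the roots of the recursion polynomial (the subject of \cite{C13}). What it does not give is the detailed structural information about the truth tables themselves that the paper's concatenation recursions provide.

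There is, however, one concrete gap: your localization $f_k=\bigoplus_{j=0}^{k-1} g(x_{j+1},\ldots,x_{j+m})$ is false for some $k\geq m$, not only for $k<m$ as you assert. The function ``generated by'' a monomial is the sum of the \emph{distinct} monomials in its rotation orbit; your formula sums over all $k$ rotations, so a monomial whose orbit in $\mathbb{Z}_k$ has a nontrivial stabilizer of even order is counted an even number of times and vanishes. This is exactly the ``short function'' phenomenon the paper must handle separately (e.g.\ $x_1x_a$ with $k=2a-2=2m-2\geq m$: your formula gives the zero function, while $wt(f_k)>0$), and the same issue arises if two of the $t$ generating monomials become rotations of one another modulo $k$. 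The fix is the one you already use for the wrap-around values: since every generating monomial is supported in a window of width $m$, any nontrivial stabilizer or inter-monomial collision forces $k\leq 2m-2$, so the exceptional set is finite and bounded explicitly, and your trace formula holds verbatim for all $k\geq 2m-1$; padding the recursion polynomial by a power of $x$ then covers the whole sequence from $\max_i d(i)$. You should state and prove that finiteness bound rather than locate the exceptions at $k<m$; with that correction the argument is complete.
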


We note that in the simplest special case of Theorem \ref{thm1} where $t=1$ and $f_k$ is quadratic one does not even need to consider recursions, since in that case \cite[Th. 8, p. 431]{Kim09} the weights $wt(f_k)$ themselves are given by a concise formula.  Thus there is no loss of generality in assuming max $d(i) \geq 3$ in Theorem \ref{thm1}.

The first faint hint for Theorem \ref{thm1} was given in \cite[Th. 10, p. 297]{CS02}, where a 
nonhomogenous linear recursion for the sequence of weights $\{wt(g_n): n = 3, 4, \ldots \}$
with $g_n(x_1,x_2,x_3) = (1,2,3)_n$ was given.  This recursion could easily  be converted into a homogeneous one, but the actual statement of the homogeneous recursion was not published until the following example \cite[Section 6.4.3, p. 124]{BCP}.

\begin{example}
\label{ex1}
 The sequence of weights $\{w_n = wt(g_n): n = 3, 4, \ldots \}$
with $g_n(x_1,x_2,x_3) = (1,2,3)_n$ satisfies the recursion
$$ w_n = -2w_{n-1}-2w_{n-2}+2w_{n-3}+4w_{n-4} = 0,~n = 7, 8 \ldots~.$$
of order $4.$ Therefore the {\em recursion polynomial} is $x^4 + 2x^3 + 2x^2 -2x - 4.$
\end{example}

The subsequent history leading up to Theorem \ref{thm1} is as follows.  The special case where $t=1$ and $f_k$ is a cubic MRS function was proved in 
\cite[Section 6]{BCP}.  The method used in that proof was simplified in \cite{BC12}, though the proof is still complicated.  By extending the ideas in \cite{BC12}, we obtain the proof of Theorem \ref{thm1} in this paper.  The details of this proof are intricate, but in Section \ref{prog} of this paper we give a Mathematica program which explicitly computes the homogeneous linear  recursion whose existence is proved in Theorem \ref{thm1}.  The reader who is only interested in finding some recursions can simply use the program and not be concerned with the details of the proof of Theorem \ref{thm1}.    

\section{Sum of cubic MRS and quadratic MRS}
\label{deg3+deg2}
In this section, we give the proof of one of  the simplest cases of Theorem \ref{thm1} where $t > 1,$ namely the case where $f_k$ is generated by the sum of a cubic and a quadratic monomial. Familiarity with the proofs in \cite{BC12} is assumed, since the proofs here build on the ideas in that paper, and we also frequently use results and notation from that paper.
Our goal in this section is to prove Theorem \ref{thm1} for the function $(1,r,s)_n + (1,a)_n.$

\subsection{Preliminaries and g-functions}
\label{subsec2.1}
\begin{lemma}
\label{quadTT}
Let $T_{n,s}$ denote the truth table of the monomial $x_1x_s$ in $n$ variables. Then we have
$$
T_{n,s} = 0_{2^{n-1}}(0_{2^{n-s}}1_{2^{n-s}})_{2^{s-2}} 
$$
where $a_k$ represents the integer $a\in \{0,1\}$ repeated $k$ times. 
\end{lemma}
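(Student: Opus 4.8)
The plan is to use the standard ``divide in half'' recursive structure of truth tables together with an easy sub-computation of the truth table of a single variable. First I would recall that for any Boolean function $g(x_1,\ldots,x_n)$, listing $V_n$ in lexicographic order places all $2^{n-1}$ vectors with $x_1 = 0$ before all those with $x_1 = 1$; equivalently, the index $j$ of $v_j$ is the integer whose binary digits from most to least significant are $x_1, x_2, \ldots, x_n$. Hence the truth table of $g$ is the concatenation $TT(g) = TT(g|_{x_1 = 0}) \,\|\, TT(g|_{x_1 = 1})$, where each restriction is viewed as a function of $x_2, \ldots, x_n$ with its truth table read in the induced lexicographic order.

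Applying this with $g = x_1 x_s$: the restriction $x_1 x_s|_{x_1 = 0}$ is identically $0$, which contributes the prefix $0_{2^{n-1}}$, while $x_1 x_s|_{x_1 = 1}$ is the single variable $x_s$, now regarded as a function of the $n-1$ variables $x_2, \ldots, x_n$. So it remains to compute the truth table of a single variable. In lexicographic order on $m$ variables $y_1, \ldots, y_m$, the coordinate $y_i$ is the bit of weight $2^{m-i}$ in the index; thus as the index runs through $0, 1, \ldots, 2^m - 1$ the value of $y_i$ is $0$ for $2^{m-i}$ consecutive indices, then $1$ for the next $2^{m-i}$, and so on, giving $2^{i-1}$ such pairs of blocks, so $TT(y_i) = (0_{2^{m-i}} 1_{2^{m-i}})_{2^{i-1}}$. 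Here $m = n-1$, the variables $y_1, \ldots, y_m$ are $x_2, \ldots, x_n$, and $x_s$ is the $(s-1)$-st of these, so its truth table is $(0_{2^{(n-1)-(s-1)}} 1_{2^{(n-1)-(s-1)}})_{2^{(s-1)-1}} = (0_{2^{n-s}} 1_{2^{n-s}})_{2^{s-2}}$. Concatenating with the prefix yields $T_{n,s} = 0_{2^{n-1}}(0_{2^{n-s}} 1_{2^{n-s}})_{2^{s-2}}$, as claimed.

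The computation is routine, so the only real obstacle is keeping the ordering conventions straight: which coordinate is the most significant bit, and the shift by one coming from counting $x_s$ among $x_2, \ldots, x_n$ rather than among $x_1, \ldots, x_n$. A useful sanity check is that the block lengths multiply correctly, $2^{s-2} \cdot 2 \cdot 2^{n-s} = 2^{n-1}$, so the displayed string does have total length $2^n$. One could instead present the whole argument as a single induction on $s$, peeling off $x_1$ repeatedly, but the direct ``read off the binary expansion of the index'' argument above is cleanest. I would also remark that the statement tacitly assumes $2 \le s \le n$ (so that $2^{s-2}$ makes sense), which is exactly the range needed in the sequel.
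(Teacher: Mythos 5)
Your proof is correct. The paper itself does not give a direct argument for this lemma: it simply cites \cite[Lemma 8, p. 293]{CS02} and observes that the statement is a special case of the general truth-table formula (Lemma \ref{generalTT}), which is later proved by induction on the degree of the monomial, peeling off one variable at a time. Your argument reaches the same formula by a self-contained two-step computation: split the table at $x_1=0$ versus $x_1=1$ to get the prefix $0_{2^{n-1}}$, then read off the truth table of the single variable $x_s$ from the binary expansion of the lexicographic index to get the periodic block $(0_{2^{n-s}}1_{2^{n-s}})_{2^{s-2}}$. This is essentially the degree-two instance of the induction underlying Lemma \ref{generalTT}, so the underlying mechanism is the same; what your version buys is that the lemma is verified in place without appealing to the external reference or to the more general machinery, and your sanity check that the block lengths multiply to $2^n$ and the remark that $2\le s\le n$ is tacitly assumed are both worthwhile additions. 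Your index bookkeeping (the shift from $x_s$ among $x_1,\ldots,x_n$ to the $(s-1)$-st variable among $x_2,\ldots,x_n$) is handled correctly.
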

\begin{proof}
This result (in a different notation) was given in  \cite[Lemma 8, p. 293]{CS02}. It is a very special case of Lemma \ref{generalTT}.  
\end{proof}

As in \cite{BC12}, it will be convenient to use a superscript to denote the number of variables in a Boolean function.  We also use the notation $T(f^k)$ for the truth table of a Boolean function $f^k$ in $k$ variables. 
\begin{definition}
For $v\in \mathbb{Z}$, $1\leq v \leq s$, we define an \emph{$\tilde{m}_v $-action} on a truth table of a quadratic Boolean function $f^{n-1}$ in $n-1$ variables as follows: Define $\tilde {m}_v$ to be the sequence obtained by splitting the truth table of $x_1x_s$ into $2^{s-v+1}$ equal sized portions, isolating the final portion, and then stretching it (by repeating each entry $2^{s-v}$ times) to a length of $2^{n-1}$. Then the $\tilde{m}_v$-action applied to $f^{n-1}$ is
$$
T(\overset{\tilde{m}_v}{f^{n-1}}) = T(f^{n-1}) \oplus \tilde{m}_v.
$$  
\end{definition}
\begin{lemma}
\label{quadraticactions}
For $1\leq v\leq s$, the $\tilde{m}_v$-actions are given by 

\begin{align*}
1.\hspace{.5pc} & m = 1_{2^{n-1}} &v=1\\
2.\hspace{.5pc} & m = (0_{2^{n-v}}1_{2^{n-v}})_{2^{v-2}}  & 1< v \leq s
\end{align*}
\end{lemma}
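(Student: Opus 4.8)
The plan is to prove the lemma by a completely explicit computation: write the truth table of $x_1x_s$ in closed form via Lemma \ref{quadTT}, then carry out literally the three operations in the definition of the $\tilde m_v$-action (split into $2^{s-v+1}$ equal portions, keep the last one, stretch it to length $2^{n-1}$) and simplify the resulting string. Since the action is applied to a function in $n-1$ variables and its output has length $2^{n-1}$, the string being split must have length $2^n$: splitting a length-$L$ string into $2^{s-v+1}$ pieces and stretching the last piece by a factor $2^{s-v}$ produces a string of length $L/2$, which is $2^{n-1}$ exactly when $L=2^n$. So the relevant truth table is $T_{n,s}$, which by Lemma \ref{quadTT} equals $0_{2^{n-1}}(0_{2^{n-s}}1_{2^{n-s}})_{2^{s-2}}$; I will use this form throughout (note $s\geq 2$, so that $x_1x_s$ is genuinely quadratic and the displayed form makes sense).

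First I would do the exponent bookkeeping. Dividing $T_{n,s}$ into $2^{s-v+1}$ equal blocks gives blocks of length $2^{n-s+v-1}$, and since $1\le v\le s$ this is at most $2^{n-1}$, so the final block lies entirely inside the second half $(0_{2^{n-s}}1_{2^{n-s}})_{2^{s-2}}$, which itself has length $2^{n-1}$. This second half is periodic with period $2^{n-s+1}$, one period being $0_{2^{n-s}}1_{2^{n-s}}$, and it consists of exactly $2^{s-2}$ such periods. The key observation is that the final block has length $2^{n-s+v-1}=2^{v-2}\cdot 2^{n-s+1}$: for $1<v\le s$ this is an integer number, $2^{v-2}$, of whole periods taken from the end, and since $2^{v-2}\mid 2^{s-2}$ it begins exactly at a period boundary; for $v=1$ the final block has length $2^{n-s}$, precisely half a period, namely the trailing $1_{2^{n-s}}$ of the last period. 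Hence the isolated final portion is $(0_{2^{n-s}}1_{2^{n-s}})_{2^{v-2}}$ when $1<v\le s$ and is $1_{2^{n-s}}$ when $v=1$, with no split sub-block of $0$'s or $1$'s in either case.

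Finally I would apply the stretching step, which replaces each symbol by $2^{s-v}$ copies of itself; this turns $0_{2^{n-s}}$ into $0_{2^{n-v}}$ and $1_{2^{n-s}}$ into $1_{2^{n-v}}$. Thus for $1<v\le s$ we obtain $(0_{2^{n-v}}1_{2^{n-v}})_{2^{v-2}}$, and for $v=1$ we obtain $1_{2^{n-s}\cdot 2^{s-1}}=1_{2^{n-1}}$, which are exactly the two formulas in the statement; a length check, $2^{n-s+v-1}\cdot 2^{s-v}=2^{n-1}$ in both cases, confirms consistency.

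The computation is routine; the only point that genuinely needs care is the alignment claim of the middle paragraph — that the final portion of $T_{n,s}$ begins precisely at a period boundary of the repeated block $0_{2^{n-s}}1_{2^{n-s}}$ (and, for $v=1$, precisely at the midpoint of a period), so that the isolated portion is a clean concatenation of copies of $0_{2^{n-s}}$ and $1_{2^{n-s}}$. This is what forces, and explains, the split of the lemma into the cases $v=1$ and $1<v\le s$, so I would state it explicitly rather than leave it implicit in the arithmetic.
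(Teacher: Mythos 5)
Your computation is correct and takes the same route as the paper, whose entire proof is the single line ``This follows at once from Lemma \ref{quadTT}'': you are simply making explicit the exponent bookkeeping and the period-boundary alignment that the paper leaves to the reader. In particular, your observations that the string being split must be $T_{n,s}$ of length $2^n$, and that the final portion lands exactly on a period boundary of $0_{2^{n-s}}1_{2^{n-s}}$ (or at its midpoint when $v=1$), are the right details to record.
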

\begin{proof}
This follows at once from Lemma \ref{quadTT}.
\end{proof}
Now we consider rotation symmetric Boolean functions which are obtained by adding a cubic MRS function to a quadratic MRS function. Let $f_1$ be the cubic MRS function generated by $x_1x_rx_s$. Let $\tilde{f}_2$ be the quadratic MRS function generated by $x_1x_a$. In the following we will derive recursions for the truth tables and weights of the function $f=f_1 + \tilde{f}_2$. \\
We begin, as was done in  \cite{BC12}, by looking at the truncated $g$-functions which are defined by looking at only the first $1+n-s$ terms of $f_1$ or the first $1+n-a$ terms of $\tilde{f} _2$. That is,
\begin{align*}
g_1^n &= x_1x_{r}x_{s}+x_2x_{r+1}x_{s+1}+\ldots +x_{1+n-s}x_{r+n-s}x_n\\
\tilde{g}_2^n &=x_1x_a+x_2x_{a+1} + \ldots + x_{1+n-a}x_n
\end{align*}
We define a map $\sigma: B_n \rightarrow B_{n+1}$ which sends the $x_i$ to $x_{i+1}$. Applying $\sigma $ to $g_1$ and $\tilde{g}_2$ gives
\begin{align*}
\sigma (g_1^n) &= x_2x_{r+1}x_{s+1}+\ldots +x_{1+n-s}x_{r+n-s}x_n+x_{1+(n+1)-s}x_{r+(n+1)-s}x_{n+1}\\
\sigma (\tilde{g}_2^n) &= x_2x_{a+2} + \ldots + x_{1+n-a}x_n + x_{2+n-a}x_{n+1},
\end{align*}
with all indices reduced mod $n$ if necessary.
From the above, we can easily see that $g_1^{n+1} = \sigma (g_1^n) + x_1x_{r}x_{s}$ and that $\tilde{g}_2^{n+1} = \sigma (\tilde{g}_2^n) + x_1x_a$.  Thus the truth tables of $g_1^{n+1}$ and $\tilde{g}_2^{n+1}$ are given respectively  by 
\begin{align*}
T(g_1^{n+1}) &= T(\sigma(g_1^{n})) \oplus T(x_1x_{r}x_{s}), \\
T(\tilde{g}_2^{n+1}) &= T(\sigma(\tilde{g}_2^{n})) \oplus T(x_1x_a). 
\end{align*}
It is then easy to see that the truth table of $g^{n+1}=g_1^{n+1}+\tilde{g}_2^{n+1} $ is given by
$$
T(g^{n+1}) = T(\sigma(g_1^{n})) \oplus T(\sigma(\tilde{g}_2^{n})) \oplus T(x_1x_{r}x_{s}). \oplus T(x_1x_a)
$$
Notice that, since $\sigma (g_1^{n})$ does not contain $x_1$, the first and second halves of $T(\sigma(g_1^{n}))$ are identical. This also holds for $\sigma(\tilde{g}_2^{n})$, since it does not contain $x_1$ either. Further, we can see that $\sigma (g_1^{n})=g_1^{n+1}(0,x_1,\ldots , x_{n+1})$. By relabelling variables, we can see this to be equivalent to $g_1^{n}$. The same  result holds for $\tilde{g}_2$. Hence $\sigma (g)$ has identical left and right halves and they are equal to $g_1^n \oplus \tilde{g}_2^n$ and
$$
T(\sigma (g^n))= T(g_1^{n}) \oplus T(\tilde{g}_2^{n}) \| T(g_1^{n}) \oplus T(\tilde{g}_2^{n}). 
$$
Further, we note that, since $x_1=0$ on the first half of the truth table, $x_1x_{r}x_{s}$ and $x_1x_a$ only affect the second half of the truth table of $g^{n+1}$. Thus
$$
T(g^{n+1}) = T(g_1^{n}) \oplus T(\tilde{g}_2^{n}) \| T(g_1^{n}) \oplus T(\tilde{g}_2^{n}) \oplus T(x_1x_{r}x_{s} ) \oplus T(x_1x_a). 
$$
As was done in \cite{BC12}, we can represent the effects of $T(x_1x_{r}x_{s})$ and $T(x_1x_a)$ using $m$-actions and $\tilde{m}$-actions. (Recall, since $m$-actions depend on $r$ and $s$, we use $m_i$ to refer to $m$-actions that arise from the monomial $x_1x_{r}x_{s}$ and $\tilde{m} _i$ to refer to  $m$-actions that arise from the monomial $x_1x_a$). So we have 
$$
T(g^{n+1}) = T(g_1^{n}) \oplus T(g_2^{n}) \| \overset{m_{s}+\tilde{m} _{a}}{T(g_1^{n}) \oplus T(g_2^{n})}.
$$  
\begin{lemma}
\label{quadsplits}
For any $\tilde{m}$-action, $\tilde{m}_v$ where $s<n$ and $1\leq v \leq s$, we can split 
$\tilde{m}_v$ into two $\tilde{m}$-actions, $\tilde{m}_{v1}$ and $\tilde{m}_{v2}$ such that 
$\tilde{m}_{v1}$ acts on the first half of a truth table  $T(b^{n-1})$ of a Boolean function in $n-1$ variables and $\tilde{m}_{v2}$ acts on the second half as follows: 
\begin{align*}
1.\hspace{.5pc} & \overset{\tilde{m}_v^n}{T(b^{n-1})} = \overset{\tilde{m}_v^{n-1}}{T^1(b^{n-1})}\| \overset{\tilde{m}_v^{n-1}}{T^2(b^{n-1})} &v=1\\
2.\hspace{.5pc} & \overset{\tilde{m}_v^n}{T(b^{n-1})} = T^1(b^{n-1})\| \overset{\tilde{m}_{v-1}^{n-1}}{T^2(b^{n-1})} &v=2\\
3.\hspace{.5pc} & \overset{\tilde{m}_v^n}{T(b^{n-1})} = \overset{\tilde{m}_{v-1}^{n-1}}{T^1(b^{n-1})}\| \overset{\tilde{m}_{v-1}^{n-1}}{T^2(b^{n-1})} &otherwise\\
\end{align*}
where $T^1(b^{n-1})$ is the first half of $T(b^{n-1})$ and $T^2(b^{n-1})$ is the second half.
\end{lemma}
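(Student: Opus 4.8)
The plan is to argue directly from the closed‑form descriptions of the $\tilde m_v$‑actions supplied by Lemma \ref{quadraticactions}, using only the elementary observation that $\oplus$ commutes with concatenation: if $u=u_1\|u_2$ and $w=w_1\|w_2$ with $|u_1|=|w_1|$, then $u\oplus w=(u_1\oplus w_1)\|(u_2\oplus w_2)$. Since by definition $T(\overset{\tilde m_v}{b^{n-1}})=T(b^{n-1})\oplus\tilde m_v^{\,n}$, and $T(b^{n-1})=T^1(b^{n-1})\|T^2(b^{n-1})$ with each half of length $2^{n-2}$, it will suffice to show that the length‑$2^{n-1}$ string $\tilde m_v^{\,n}$, when cut exactly in half, has each half equal to the length‑$2^{n-2}$ action string predicted for $n-1$ variables. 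So the whole lemma reduces to three short computations with the strings $1_{2^{n-1}}$ and $(0_{2^{n-v}}1_{2^{n-v}})_{2^{v-2}}$, and I would organise the write‑up as exactly those three cases.

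For $v=1$ I would note $\tilde m_1^{\,n}=1_{2^{n-1}}=1_{2^{n-2}}\|1_{2^{n-2}}$, and $1_{2^{n-2}}$ is precisely $\tilde m_1^{\,n-1}$; xoring each half with it gives item~1. For $v=2$ I would use $\tilde m_2^{\,n}=(0_{2^{n-2}}1_{2^{n-2}})_{2^{0}}=0_{2^{n-2}}\|1_{2^{n-2}}$: xoring the first half with $0_{2^{n-2}}$ leaves it untouched, while the second half becomes $T^2(b^{n-1})\oplus 1_{2^{n-2}}$, which is the $\tilde m_1^{\,n-1}$‑action, giving item~2. This case is genuinely special, and I would point out why: when $v=2$ the string $\tilde m_v^{\,n}$ is a single $0$‑block followed by a single $1$‑block, so its two halves do not share a common period, whereas for larger $v$ they do. For $3\le v\le s$ the exponent $2^{v-2}$ is even, so $\tilde m_v^{\,n}=(0_{2^{n-v}}1_{2^{n-v}})_{2^{v-2}}$ splits as $(0_{2^{n-v}}1_{2^{n-v}})_{2^{v-3}}\|(0_{2^{n-v}}1_{2^{n-v}})_{2^{v-3}}$, each block having length $2^{\,n-v+1}\cdot 2^{\,v-3}=2^{n-2}$ as required; applying Lemma \ref{quadraticactions} with $n-1$ variables and index $v-1$ gives $\tilde m_{v-1}^{\,n-1}=(0_{2^{(n-1)-(v-1)}}1_{2^{(n-1)-(v-1)}})_{2^{(v-1)-2}}=(0_{2^{n-v}}1_{2^{n-v}})_{2^{v-3}}$, so each half of $\tilde m_v^{\,n}$ equals $\tilde m_{v-1}^{\,n-1}$, and item~3 follows. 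I would remark that the hypothesis $s<n$ is exactly what guarantees $n-v\ge n-s\ge 1$, so every exponent that appears is nonnegative and all the block lengths are as stated.

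I do not expect a substantive obstacle here: the content is entirely the bookkeeping of exponents, making sure the two halves really have length $2^{n-2}$ and that the reindexing $v\mapsto v-1$, $n\mapsto n-1$ lines the patterns up correctly, together with the conceptual point of recognising that $v=1$ and $v=2$ must be peeled off from the generic argument. If anything, the mild subtlety is purely notational — pinning down what the superscript on $\tilde m_v^{\,n}$ versus $\tilde m_v^{\,n-1}$ means (the target length of the stretched string) — and I would state that convention explicitly at the start so that the three case computations read off immediately from Lemma \ref{quadraticactions}.
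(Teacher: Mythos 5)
Your proposal is correct and follows exactly the route the paper intends: the paper's entire proof is the single sentence ``This easily follows from Lemma \ref{quadraticactions},'' and your three case computations (splitting $1_{2^{n-1}}$ and $(0_{2^{n-v}}1_{2^{n-v}})_{2^{v-2}}$ in half and reindexing $v\mapsto v-1$, $n\mapsto n-1$) are precisely the details being elided. Your exponent bookkeeping checks out, so this is simply a fully written-out version of the paper's argument.
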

\begin{remark}
Note that these are the splitting rules given in \cite[Lemma 5]{BC12}, with the obvious exception that we do not need to account for $v=s-r+2$ in case 2 above. 
\end{remark}
\begin{proof}
This easily follows from Lemma \ref{quadraticactions}.
\end{proof}

The rules for the splitting of the $m$-actions are given in \cite[Lemma 5]{BC12}, which is exactly the same as Lemma \ref{quadsplits} except that $v=s-r+2$ must be added to case 2.
So all $m$-actions split according to the rules laid out in \cite[Lemma 5]{BC12} and 
$\tilde{m}$-actions split according to Lemma \ref{quadsplits}. Also,  these splits are independent (in the sense that the splitting of $m_i$ does not affect the splitting of $\tilde{m}_j$). By repeating the analysis above and using the splitting rules from \cite{BC12} and Lemma \ref{quadsplits}, we can derive a truth table recursion for the $g$ functions. 

Note that, each time the truth table splits and additional two $m$-actions are produced on the right half, $m _{s} $ and $\tilde{m} _{a}$ in addition to any $m$-actions that may already be present. 

We can represent a collection of $m$-actions using the {\it operation } notation introduced in \cite[Definition 6]{BC12}: For any $0\leq i \leq 2^{s-1}$, let $(i)_2$ be the string of $s$ $1$'s and $0$'s that make up the base 2 representation of $i$, prepending with 0's if necessary. Then we define $O_i$ be the collection of $m$-actions 
$$
O_i = \{ m_j | \text{ the }j \text{th digit (counting from the left) of }(i)_2 \text{ is 1} \}.  
$$   
Let $\tilde{O}_i $ be defined analogously for the $\tilde{m} $-actions. That is, 
$$
\tilde{O}_i = \{ \tilde{m}_j | \text{ the }j \text{th digit (counting from the left) of }(i)_2 \text{ is 1}. \}  
$$

We can develop a recursion for the truth table of $g$ by repeating the method outlined above (similar to the method employed in  \cite[Section 4]{BC12}), using 
\cite[Lemma 5]{BC12} to determine how the $m$ and $\tilde{m}$-actions split. We can keep track of all of the operation splits for both $O$ and $\tilde{O}$ operations using the $\Omega $ operation defined by
\begin{align*}
\Omega _i = \{ &m_j | \text{ the }j \text{th digit (counting from the left) of }(i)_2 \text{ is 1} \}\cup \\
& \{\tilde{m}_k | \text{ the }(k+s) \text{th digit (counting from the left) of }(i)_2 \text{ is 1} \}.
\end{align*}
By using  \cite[Lemmas 5 and 7]{BC12}, we can create a $2^{s+a-2}+1 \times 2^{s+a-2}+1$ matrix $A$, whose $i$th column represents the $\Omega $ operations that are produced when $\Omega _i$ splits. Now \cite[Th. 1, p. 116]{BC12} implies that the minimal polynomial for $A$ gives a recursion for the weights of $g$. 

\subsection{Recursion for the weights of $(1,r,s)_n + (1,a)_n$}
\label{subsec2.2}
In this section results from \cite{BC12} are frequently used, and the reader may consult that paper if more details are needed. Define $f^n = (1,r,s)_n + (1,a)_n,$ so  $f^n=f_1 + \tilde{f}_2$ in the notation of Section 
\ref{subsec2.1}.
Now \cite[Th. 3, p. 125]{BC12} for cubic MRS functions when applied to $f_1$ states that the weights for $f_1$ satisfy the same recursion as can be produced using the method above for $g_1$. To see that this also holds for not only quadratic functions, but for our sum functions as well, we consider the following:\\
First note that we can write $f^n$ as
\begin{align*}
f^n = f_1^n + \tilde{f}_2^n &= x_1x_{r}x_{s}+ \ldots + x_{1+n-s}x_{r+n-s}x_n \\
&+ x_1x_{2+n-s}x_{r+1+n-s}+\ldots + x_{n-r+1}x_{s-r}x_n \\
&+ x_{n-r+2}x_1x_{s-r+1}+\ldots + x_nx_{r-1}x_{s-1}\\
&+ x_1x_a+ \ldots + x_{1+n-a}x_n \\
&+ x_1x_{2+n-a}+\ldots + x_{1+a-2}x_n \\
&=g_{r,s}+g_{r',s'}+g_{r'',s''}+\tilde{g}_{a}+\tilde{g}_{2+n-a}
\end{align*}
Since $g_{r',s'}, g_{r'',s''}$, and $\tilde{g}_{2+n-a}$ depend on $n$, we cannot use the same techniques above to find the recursion. Instead, we look at the individual monomials that make up each of the above $g$ and $\tilde{g}$ functions. The work for the $g$ functions has been done in \cite[Section 5]{BC12}. The results are as follows: Let the monomials from $g_{r',s'}$ be given by $h_i=x_{i}x_{n-s+1+i}x_{n-s+r+i}$. Then \cite[Lemma 10, p. 117]{BC12} gives the truth table recursion for $h_j^n$ as
\begin{align}
T(h^n_j) &= \left( T^1_j(h^{n-1}_j) \| T^1_j(h^{n-1}_j) \| T^2_j(h^{n-1}_j) \| T^2_j(h^{n-1}_j) \right) _{2^{j-1}} \notag \\
&\text{or, equivalently} \notag \\
&= T^1_j(h^{n-1}_j) \| T^1_j(h^{n-1}_j) \| T^2_j(h^{n-1}_j) \| T^2_j(h^{n-1}_j) \| \ldots T^{2^j} _j(h^{n-1}_j) \| T^{2^j}_j(h^{n-1}_j) \notag
\end{align}
where the notation $T^a_b( u)$ represents the $a$-th portion of the truth table for ``u" after it has been divided into $2^b$ equally sized pieces. Note that the above recursion gives the truth table for $h_j^n$ as the concatenation of $2^j$ pairs, or $2^{j+1}$ pieces. Further, \cite[Remark 3, p. 118]{BC12} states that for $1\leq j \leq s-r$, the above truth table recursions can be further broken down into $2^k$ pieces for $j+1 \leq k \leq n-s+j$:
\begin{align}
T(h_j^n) &= \left( (T^1_j(h_j^{n-k-j}))_{2^{k-j}} \| (T^2_j(h_j^{n-k-j}))_{2^{k-j}} \right) _{2^{j-1}} \notag \\
& \text{or, equivalently} \notag \\
T(h_j^n) &=  \underbrace{T^1_j(h_j^{n-k-j}) \| \ldots \| T^1_j(h_j^{n-k-j})}_{2^{k-j} \text{ times}} \| \ldots \| \underbrace{T^{2^j}_j(h_j^{n-k-j}) \| \ldots \| T^{2^j}_j(h_j^{n-k-j})}_{2^{k-j} \text{ times}} \notag
\end{align}
Note that, above, by choosing large $n$, we can make $k$ as large as we please. A similar analysis can be done on the monomials, $\eta$ from $g_{r'',s''}$. For $1\leq j \leq r-1$, the recursion is 
\begin{align}
T(\eta_j^n) &= \left( \left( \left( T^1_{s-r+j}(\eta_j^{n-1}) \right) _2 \right) _{2^{s-r}} \| \left( \left( T^{2^{s-r}+1}_{s-r+j}(\eta_j^{n-1}) \right) _2 \| \left(T^{2^{s-r}+2}_{s-r+j}(\eta_j^{n-1}) \right) _2 \right) _{2^{s-r-1}} \right) _{2^{j-1}} \notag \\
& \text{or, equivalently } \notag \\
&= T^1_{s-r+j}(\eta_j^{n-1}) \| T^1_{s-r+j}(\eta_j^{n-1}) \| \ldots \| T^{2^{s-r+j}}_{s-r+j}(\eta_j^{n-1}) \| T^{2^{s-r+j}}_{s-r+j}(\eta_j^{n-1}) \notag
\end{align}
which splits $T(\eta_j^n)$ into $2^{s-r+j}$ pairs ($2^{s-r+j}$ parts). In these recursions, too, the truth tables can be broken up into arbitrarily small pieces, depending on the size on $n$. For $1\leq j \leq r-1$, we can split $T(\eta_j^n)$ into $2^k$ equal portions with $s-r+j+1 \leq k \leq n-r+j$: 
\begin{align}
T(\eta_j^n) &= (A_{2^{s-r}} \| B_{2^{s-r-1}}) _{2^{j-1}} \text{ where, } \notag \\
A &= \left( T^1_{s-r+j}(\eta_j^{n-k-r+s+1})\right) _{2^{k-s+r-j}} \text{ and} \notag \\
B &= \left( T^{2^{s-r}+1}_{s-r+j}(\eta_j^{n-k-r+s+1})\right) _{2^{k-s+r-j}} \| \left( T^{2^{s-r}+2}_{s-r+j}(\eta_j^{n-k-r+s+1})\right) _{2^{k-s+r-j}}  \notag \\
& \text{or, equivalently} \notag \\
T(\eta_j^n) &= \underbrace{T^1_{s-r+j}(\eta_j^{n-k-r+s+1}) \| \ldots  T^1_{s-r+j}(\eta_j^{n-k-r+s+1})}_{2^{k-s+r-j} \text{ times}} \ldots \notag \\
& \hspace{3pc} \ldots \| \underbrace{T^{2^{s-r+j}}_{s-r+j}(\eta_j^{n-k-r+s+1}) \| \ldots \| T^{2^{s-r+j}}_{s-r+j}(\eta_j^{n-k-r+s+1})}_{2^{k-s+r-j} \text{ times}} \notag
\end{align}
On the other hand, the $a-1$ quadratic monomials from $\tilde{g}_{2+n-a}$ all take the form $\tilde{h}_i=x_ix_{n-a+1+i}$. \\
\begin{lemma}
\label{quadTTgeneral}
The truth table of $x_ix_j$ in $n$ variables where $1\leq i \leq j \leq n$ is given by
$$
(0_{2^{n-i}}(0_{2^{n-j}}1_{2^{n-j}})_{2^{j-i-1}})_{2^{i-1}}
$$
\end{lemma}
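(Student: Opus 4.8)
The plan is to reduce Lemma \ref{quadTTgeneral} to the special case $i=1$, which is exactly Lemma \ref{quadTT}. The key observation is that the monomial $x_ix_j$ involves none of the variables $x_1, \ldots, x_{i-1}$, so its truth table in $n$ variables is just its truth table as a function of the $n-i+1$ variables $x_i, x_{i+1}, \ldots, x_n$, written out $2^{i-1}$ times in succession. Once this reduction is in place, a single application of Lemma \ref{quadTT} followed by an exponent bookkeeping step finishes the argument.

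To justify the repetition claim I would unpack the lexicographic ordering directly: writing an index $k$ with $0 \le k < 2^n$ uniquely as $k = q\cdot 2^{n-i+1} + \ell$ with $0 \le q < 2^{i-1}$ and $0 \le \ell < 2^{n-i+1}$, the length-$n$ string $v_k = (b_1, \ldots, b_n)$ has its last $n-i+1$ coordinates $(b_i, \ldots, b_n)$ equal to the length-$(n-i+1)$ binary expansion of $\ell$, independently of $q$. Hence $x_ix_j(v_k)$ depends only on $\ell$, so the truth table of $x_ix_j$ in $n$ variables is the concatenation of $2^{i-1}$ identical copies of the block $\bigl(x_ix_j(v_\ell)\bigr)_{0 \le \ell < 2^{n-i+1}}$, which is precisely the truth table of $x_ix_j$ viewed in the variables $x_i, \ldots, x_n$. (Equivalently, one can argue by induction on $i$ using the fact that a Boolean function not depending on $x_1$ has a truth table of the form $T \| T$.)

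Next I would relabel $x_{i+m-1} \mapsto y_m$ for $1 \le m \le n-i+1$, so that $x_ix_j$ becomes $y_1 y_{j-i+1}$, a function of the form $y_1 y_S$ in $N := n-i+1$ variables with $S := j-i+1$; note that $2 \le S \le N$ whenever $i < j \le n$, the case $i=j$ being the trivial truth table $(0_{2^{n-i}}1_{2^{n-i}})_{2^{i-1}}$ of the single variable $x_i$. Applying Lemma \ref{quadTT} with parameters $N, S$ gives the block $0_{2^{N-1}}(0_{2^{N-S}}1_{2^{N-S}})_{2^{S-2}}$, and the substitutions $N-1 = n-i$, $N-S = n-j$, $S-2 = j-i-1$ convert this to $0_{2^{n-i}}(0_{2^{n-j}}1_{2^{n-j}})_{2^{j-i-1}}$. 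Repeating this block $2^{i-1}$ times, as established above, yields exactly $(0_{2^{n-i}}(0_{2^{n-j}}1_{2^{n-j}})_{2^{j-i-1}})_{2^{i-1}}$.

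There is essentially no deep obstacle here: the proof is pure bookkeeping. The only points that require a little care are verifying the repetition structure cleanly from the lexicographic order (the second paragraph), checking that the parameter ranges make Lemma \ref{quadTT} applicable (i.e. $2 \le j-i+1 \le n-i+1$), and isolating the degenerate case $i=j$. I would present the whole thing as this short self-contained computation rather than appealing to the more general (and, at this point in the paper, not-yet-stated) Lemma \ref{generalTT}.
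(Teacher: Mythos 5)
Your proof is correct, but it takes a different route from the paper: the paper gives no self-contained argument for Lemma \ref{quadTTgeneral} at all, simply citing \cite[Lemma 8, p. 293]{CS02} and remarking that the statement is a special case of the later, more general Lemma \ref{generalTT} (whose proof proceeds by induction on the degree of the monomial, with the quadratic case as essentially the base). Your two-step argument --- first showing via the lexicographic decomposition $k = q\cdot 2^{n-i+1}+\ell$ that a monomial not involving $x_1,\ldots,x_{i-1}$ has truth table equal to $2^{i-1}$ concatenated copies of its truth table in the trailing $n-i+1$ variables, then relabelling and invoking Lemma \ref{quadTT} with $N=n-i+1$, $S=j-i+1$ --- is exactly the kind of reduction the paper deploys inside the inductive step of Lemma \ref{generalTT} (``the truth table for the product of the last variables \ldots is the same as the truth table \ldots in $n-k_1$ variables repeated $2^{k_1}$ times''), so your proof in effect supplies, for the quadratic case, the detail the paper defers. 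What your approach buys is a short, self-contained verification that avoids both the external citation and the forward reference to a lemma proved only later in the paper (a reference that is mildly circular, since the induction in Lemma \ref{generalTT} bottoms out at the quadratic case); what the paper's approach buys is economy, since Lemma \ref{generalTT} is needed anyway and subsumes this statement. Your parameter check $2\le S\le N$ and your isolation of the degenerate case $i=j$ (where the exponent $2^{j-i-1}$ in the stated formula is not even an integer, and the correct table is $(0_{2^{n-i}}1_{2^{n-i}})_{2^{i-1}}$) are both sound and are points the paper glosses over.
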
 
\begin{proof}
This result (in a different notation) was given in \cite[Lemma 8, p. 293]{CS02}.  It is a very special case of Lemma \ref{generalTT}.
\end{proof}
\begin{lemma}
\label{htilde}
The truth table of $\tilde{h}_j=x_jx_{n-a+1+j}$ for $1\leq j \leq a-1$ is given by
\begin{align}
T(\tilde{h}^n_j) &= \left( T^1_j(\tilde{h}^{n-1}_j) \| T^1_j(\tilde{h}^{n-1}_j) \| T^2_j(\tilde{h}^{n-1}_j) \| T^2_j(\tilde{h}^{n-1}_j) \right) _{2^{j-1}} \label{eq1} \\
&\text{or, equivalently} \notag \\
&= T^1_j(\tilde{h}^{n-1}_j) \| T^1_j(\tilde{h}^{n-1}_j) \| T^2_j(\tilde{h}^{n-1}_j) \| T^2_j(\tilde{h}^{n-1}_j) \| \ldots T^{2^j} _j(\tilde{h}^{n-1}_j) \| T^{2^j}_j(\tilde{h}^{n-1}_j) \notag
\end{align}
where the notation $T^a_b( - )$ is as defined above.
\end{lemma}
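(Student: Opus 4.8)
The plan is to obtain \eqref{eq1} directly from the closed form for quadratic truth tables in Lemma~\ref{quadTTgeneral}, in the same way that \cite[Lemma 10]{BC12} handles the cubic pieces $h_j$; indeed the recursion claimed in \eqref{eq1} is formally identical to the one displayed earlier for $T(h^n_j)$, so the argument is parallel. Throughout take $n$ large enough (in particular $n>a$) that the quadratics $\tilde{h}^n_j$ and $\tilde{h}^{n-1}_j$ are non-degenerate and all exponents below are nonnegative. Writing $\tilde{h}^n_j=x_jx_{n-a+1+j}$ and applying Lemma~\ref{quadTTgeneral} with $(i,j,n)\mapsto(j,\,n-a+1+j,\,n)$ gives
$$ T(\tilde{h}^n_j)=\bigl(0_{2^{n-j}}(0_{2^{a-1-j}}1_{2^{a-1-j}})_{2^{n-a}}\bigr)_{2^{j-1}}, $$
where the three exponents $n-j$, $a-1-j=n-(n-a+1+j)$ and $n-a=(n-a+1+j)-j-1$ come from the substitution. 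Applying the same lemma to $\tilde{h}^{n-1}_j=x_jx_{n-a+j}$ in $n-1$ variables gives
$$ T(\tilde{h}^{n-1}_j)=\bigl(0_{2^{n-1-j}}(0_{2^{a-1-j}}1_{2^{a-1-j}})_{2^{n-a-1}}\bigr)_{2^{j-1}}. $$

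Next I would read off the $2^j$-fold split of $T(\tilde{h}^{n-1}_j)$. The inner alternating block $0_{2^{a-1-j}}1_{2^{a-1-j}}$ has length $2^{a-j}$, so the bracketed block $B:=0_{2^{n-1-j}}(0_{2^{a-1-j}}1_{2^{a-1-j}})_{2^{n-a-1}}$ has length $2^{n-1-j}+2^{n-a-1}\cdot 2^{a-j}=2^{n-j}$, i.e.\ exactly two pieces of the $2^j$-fold split (each of length $2^{n-1-j}$), and $B$ is repeated $2^{j-1}$ times. Hence the split has period $2$: for every $k$,
$$ T^{2k-1}_j(\tilde{h}^{n-1}_j)=0_{2^{n-1-j}}, \qquad T^{2k}_j(\tilde{h}^{n-1}_j)=(0_{2^{a-1-j}}1_{2^{a-1-j}})_{2^{n-a-1}}. $$
In particular this pins down $T^1_j$ and $T^2_j$.

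Then I would substitute these into the right-hand side of \eqref{eq1}. The block $T^1_j\|T^1_j\|T^2_j\|T^2_j$ becomes $0_{2^{n-1-j}}\|0_{2^{n-1-j}}\|(0_{2^{a-1-j}}1_{2^{a-1-j}})_{2^{n-a-1}}\|(0_{2^{a-1-j}}1_{2^{a-1-j}})_{2^{n-a-1}}=0_{2^{n-j}}(0_{2^{a-1-j}}1_{2^{a-1-j}})_{2^{n-a}}$, and repeating it $2^{j-1}$ times reproduces exactly the closed form for $T(\tilde{h}^n_j)$ found above, proving \eqref{eq1}. The ``equivalently'' line is then immediate from the period-$2$ structure just recorded: since $T^{2k-1}_j=T^1_j$ and $T^{2k}_j=T^2_j$, the list $T^1_j\|T^1_j\|T^2_j\|T^2_j\|\cdots\|T^{2^j}_j\|T^{2^j}_j$ collapses to $(T^1_j\|T^1_j\|T^2_j\|T^2_j)_{2^{j-1}}$.

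There is no real conceptual obstacle; the lemma is pure bookkeeping once Lemma~\ref{quadTTgeneral} is available. The only point requiring care is the exponent arithmetic in the two substitutions and the check that passing from $n-1$ to $n$ variables doubles precisely the lengths $2^{n-1-j}\to 2^{n-j}$ and $2^{n-a-1}\to 2^{n-a}$ while leaving the innermost block $0_{2^{a-1-j}}1_{2^{a-1-j}}$ untouched, which is what makes the two halves of $B$ match the block $T^1_j\|T^1_j\|T^2_j\|T^2_j$. The degenerate cases $j=1$ (so $2^{j-1}=1$) and $j=a-1$ (so $2^{a-1-j}=1$) should be glanced at, but the formulas specialize without incident.
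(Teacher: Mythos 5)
Your proof is correct and takes essentially the same route as the paper, whose proof is just a one-line appeal to the method of \cite[Lemma 10]{BC12}: derive the closed form of the truth table and read off the $2^j$-fold split. You have simply written out that method explicitly, using Lemma~\ref{quadTTgeneral} and checking the exponent bookkeeping, and the arithmetic is all in order.
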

\begin{proof}
This follows easily using the same method used to prove \cite[Lemma 10, p. 117]{BC12}.
\end{proof}
As was the case with the $h_j$ monomials, we can further split the truth tables for the $\tilde{h}_j$ monomials above into $2^k$ pieces for $j+1 \leq k \leq n-a+j$:
\begin{align}
T(\tilde{h}_j^n) &= \left( (T^1_j(\tilde{h}_j^{n-k-j}))_{2^{k-j}} \| (T^2_j(\tilde{h}_j^{n-k-j}))_{2^{k-j}} \right) _{2^{j-1}} \label{eq2} \\
& \text{or, equivalently} \notag \\
T(\tilde{h}_j^n) &=  \underbrace{T^1_j(\tilde{h}_j^{n-k-j}) \| \ldots \| T^1_j(\tilde{h}_j^{n-k-j})}_{2^{k-j} \text{ times}} \| \ldots \| \underbrace{T^{2^j}_j(\tilde{h}_j^{n-k-j}) \| \ldots \| T^{2^j}_j(\tilde{h}_j^{n-k-j})}_{2^{k-j} \text{ times}} \notag
\end{align}
Again, by choosing large $n$, we can make $k$ as large as we please.

Considering them individually, we see that the recursions for $g_{r',s'}$ and $\tilde{g}_{2+n-a}$ each split the truth tables into a maximum of $2^{s-r+1}$ or $2^{a}$ pieces, respectively. On the other hand, the recursion for $g_{r'',s''}$ splits the truth table into $2^{s}$ pieces. Thus, to make the recursions for $f$ uniform, we write split the recursions for $g$ and the remaining monomials of $g_{r',s'}$,  $g_{r'',s''}$, $g_{2+n-a}$, into $2^{\text{max}\{s,a\}}$ pieces using the equations above. Thus, letting $s = \text{max}\{s1,s2\}$, we see that the truth table recursion for $f$ is given by
\begin{align}
T(f^n) &= \overset{\Omega _{w_1}}{T(g^{n-s}) \bigoplus _{i=1}^{s-r} T^1_s(h_i^n) \bigoplus _{i=1}^{r-1} T^1_s(\eta _i^n) \bigoplus _{i=1}^{a} T^1_s(\tilde{h}_i^n)}\| \ldots \notag \\ 
& \hspace{3pc} \ldots \| \overset{\Omega _{w_{2^{s}}}}{T(g^{n-s}) \bigoplus _{i=1}^{s-r} T^{2^s}_s(h_i^n) \bigoplus _{i=1}^{r-1} T^{2^s}_s(\eta_i^n) \bigoplus _{i=1}^{a} T^{2^s}_s(\tilde{h}_i^n) }\label{TT}
\end{align}
where the $\Omega $ operations above are those produced from the splitting of $T(g)$ (since no $m$-actions are produced when any of the monomials $h$ or $\tilde{h}$ split). 
\begin{remark}
In order for the above recursion to work, we assume that, for $1\leq j \leq a-1$, 
$\tilde{h}_j=x_jx_{n-a+1+j}$ exists for all $n$. However, in the case where $n=2(a-1)$ (here we would have a "short" quadratic function; see \cite[p. 430]{Kim09}), the monomials of $\tilde{h}$ are of the form
$$
x_jx_{2(a-1)-a+1+j}=x_jx_{2a-2-a+1+j}=x_jx_{a-1+j}
$$
which are exactly the same monomials as are in $\tilde{g}$. Thus the monomials $\tilde{h}_j$ cancel out $\tilde{g}$ and all we are left with, for these values of $n$ is $f^n=f_1$. Therefore, we redefine our function $f$ to be
$$
\bar{f} = \left\{ \begin{array}{rl}
	f_1+\tilde{f}_2 &\mbox{ if $n\neq 2(a-1)$} \\
	f_1 &\mbox{ otherwise}
	\end{array} \right.
$$
where $f_1$ is the cubic MRS function generated by $x_1x_rx_s$ and $\tilde{f}_2$ is the MRS quadratic MRS function generated by $x_1x_a$.
\end{remark}
We note that we can pair each of the $2^s$ portions of the truth table in \eqref{TT} so that for $j$ an odd integer, the portions $j$ and $j+1$ have the same structure (see 
\cite[Th. 2, p. 121]{BC12}). Thus, the arguments given in  \cite[Lemma 13 and Th. 3]{BC12} also apply to the recursions for  $T(g^n)$ and $T(\bar{f}^n)$, so the weights of $\bar{f}^n$ satisfy the recursion for the weights of $g^n$. 

\section{Weight recursions for any rotation symmetric function}
\label{wtrecur}
In this section we generalize the work of Section \ref{deg3+deg2} to any rotation symmetric function, that is, to any sum of MRS functions.  We give the details of extending the definition of the actions in Section \ref{subsec2.1} to functions of degree greater than $3,$ but due to the elaborate notation that would be required, we do not give the details for the application of these actions to the computation of the weight recursions for rotation symmetric functions in general. It suffices to follow the method in Section \ref{subsec2.2}, as is done in the Mathematica program in Section \ref{prog} at the end of this paper. 

In the following discussion, unless otherwise noted, we will assume that the function in $n$ variables given by the monomial $x_{k_1}x_{k_2}\ldots x_{k_m}$ satisfies  $k_1 < k_2 < \ldots < k_m$ and  $k_i \leq n$ for all $i$.
\begin{lemma}
\label{generalTT}
Let the truth table for the monomial $x_{k_1}x_{k_2}\ldots x_{k_m}$ in $n$ variables be denoted $T^n([k_1, \ldots, k_m])$. Then $T^n([k_1, \ldots, k_m])$ is given by
$$0_{2^{n-k_1}}(0_{2^{n-k_2}}(\ldots (0_{2^{n-k_{m-1}}}(0_{2^{n-k_m}}1_{2^{n-k_m}})_{2^{k_m-k_{m-1}-1}})_{2^{k_{m-1}-k_{m-2}-1}})\ldots )_{2^{k_2-k_1-1}})_{2^{k_1-1}} $$
\end{lemma}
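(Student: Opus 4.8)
The plan is to prove Lemma~\ref{generalTT} by induction on the degree $m$ of the monomial, using the standard fact about how the truth table of a Boolean function decomposes according to the value of its lexicographically most significant variable. Recall that with the lexicographic ordering of $V_n$ used in this paper, $x_1$ is the most significant bit: the first half of the truth table (indices $v_0,\dots,v_{2^{n-1}-1}$) corresponds to $x_1=0$ and the second half to $x_1=1$. More generally, after fixing $x_1,\dots,x_{k_1-1}$, the variable $x_{k_1}$ is the most significant of the remaining variables on each resulting block.

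First I would establish the base case $m=1$: the monomial $x_{k_1}$ has truth table consisting of a $0$ exactly when $x_{k_1}=0$ and a $1$ when $x_{k_1}=1$; since $x_{k_1}$ runs through its two values once every $2^{n-k_1}$ consecutive truth-table entries and this pattern of a block of $2^{n-k_1}$ zeros followed by a block of $2^{n-k_1}$ ones repeats $2^{k_1-1}$ times, we get $\bigl(0_{2^{n-k_1}}1_{2^{n-k_1}}\bigr)_{2^{k_1-1}}$; the claimed formula collapses to exactly this when $m=1$ (reading the inner part as $0_{2^{n-k_1}}1_{2^{n-k_1}}$). For the inductive step, assume the formula holds for all monomials of degree $m-1$ in any number of variables. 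Write $g = x_{k_1}x_{k_2}\cdots x_{k_m}$. The key observation is that $g$ evaluates to $0$ on every assignment with $x_{k_1}=0$, and on the set where $x_{k_1}=1$ it equals $x_{k_2}\cdots x_{k_m}$. Splitting the truth table into the $2^{k_1-1}$ repeated blocks governed by $x_1,\dots,x_{k_1-1}$ and, within each block, into the half where $x_{k_1}=0$ (which contributes $0_{2^{n-k_1}}$) and the half where $x_{k_1}=1$, the latter half is precisely the truth table of $x_{k_2}\cdots x_{k_m}$ viewed as a function of the $n-k_1$ variables $x_{k_1+1},\dots,x_n$. Relabel those variables $y_1,\dots,y_{n-k_1}$ so that $x_{k_i}$ becomes $y_{k_i-k_1}$; by the induction hypothesis applied with $n' = n-k_1$ and indices $k_i' = k_i - k_1$ for $2\le i\le m$, that half equals
$$
\bigl(0_{2^{n'-k_2'}}(\ldots(0_{2^{n'-k_m'}}1_{2^{n'-k_m'}})_{2^{k_m'-k_{m-1}'-1}}\ldots)_{2^{k_3'-k_2'-1}}\bigr)_{2^{k_2'-1}},
$$
and since $n'-k_i' = n-k_i$ and $k_i'-k_{i-1}' = k_i-k_{i-1}$ for $i\ge 3$ while $k_2'-1 = k_2-k_1-1$, this is exactly the inner bracketed expression in the statement. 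Prepending $0_{2^{n-k_1}}$ for the $x_{k_1}=0$ half and wrapping the resulting block with the outer exponent $2^{k_1-1}$ yields the claimed formula, completing the induction.

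The only real obstacle is bookkeeping: one must be careful that the exponents of the repetition counts match up after the index shift by $k_1$, in particular checking the boundary terms $2^{k_1-1}$, $2^{k_2-k_1-1}$, and $2^{n-k_m}$, and confirming that the convention ``$k_1<k_2<\dots<k_m\le n$'' makes all these exponents nonnegative so that every $a_\ell$ notation is well defined. I would also note explicitly that Lemmas~\ref{quadTT} and~\ref{quadTTgeneral}, already used above, are the cases $m=2$ with $k_1=1$ and with general $k_1$ respectively, so this lemma genuinely subsumes them and no circularity is introduced.
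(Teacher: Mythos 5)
Your proof is correct and follows essentially the same route as the paper: an induction on the degree $m$ in which the tail monomial $x_{k_2}\cdots x_{k_m}$ is identified, after the index shift $k_i\mapsto k_i-k_1$, with a degree-$(m-1)$ monomial in $n-k_1$ variables, and the factor $x_{k_1}$ then forces the $0_{2^{n-k_1}}$ blocks. The only cosmetic difference is that you start the induction at $m=1$ and phrase the step as a direct block decomposition of the truth table, whereas the paper quotes the $m=2,3$ cases from earlier work and describes the tail's truth table as the shifted table repeated $2^{k_1}$ times before multiplying by $x_{k_1}$.
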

\begin{proof}
A similar result is given in \cite[Lemma 11, p. 298]{CS02} in a different notation, but the proof is omitted. We provide a proof here, using more convenient notation, as in Lemma \ref{quadTTgeneral}. We proceed by induction. From \cite[Lemma 8, p. 293]{CS02} and \cite[Lemma 9, p. 117]{BC12}, we have
\begin{align*}
T^n(x_{k_1},x_{k_2}) &= (0_{2^{n-k_1}}(0_{2^{n-k_2}}1_{2^{n-k_2}})_{2^{k_2-k_1-1}})_{2^{k_1-1}}\\
T^n(x_{k_1},x_{k_2},x_{k_3}) &= (0_{2^{n-k_1}}(0_{2^{n-k_2}}(0_{2^{n-k_3}}1_{2^{n-k_3}})_{2^{k_3-k_2-1}})_{2^{k_2-k_1-1}})_{2^{k_1-1}}
\end{align*}
Assume 
\begin{align*}
T^n(&[k_1, \ldots, k_r]) = \\
&0_{2^{n-k_1}}(0_{2^{n-k_2}}(\ldots (0_{2^{n-k_{r-1}}}(0_{2^{n-k_r}}1_{2^{n-k_r}})_{2^{k_r-k_{r-1}-1}})_{2^{k_{r-1}-k_{k-2}-1}})\ldots )_{2^{k_2-k_1-1}})_{2^{k_1-1}}
\end{align*}
for all $k_1, \ldots , k_r$. To find the truth table for $x_{k_1}x_{k_2}\ldots x_{k_{r+1}}$ in $n$ variables, we begin by noticing that the truth table for the product of the last $k$ variables (i.e. $x_{k_2}x_{k_3}\ldots x_{k_{r+1}}$) is the same as the truth table for $x_{k_2-k_1}\ldots x_{k_{r+1}-k_1}$ in $n-k_1$ variables repeated $2^{k_1}$ times. From our induction hypothesis, we have
\begin{align*}
T^{n-k_1}(&[k_2-k_1, \ldots, k_r-k_1]) = \\
&0_{2^{n-k_1-(k_2-k_1)}}(\ldots (0_{2^{n-k_1-(k_{r-1}-k_1)}}(0_{2^{n-k_1-(k_r-k_1)}}1_{2^{n-k_1-(k_r-k_1)}})_{2^{k_r-k_1-(k_{r-1}-k_1)-1}})\ldots \\
&\hspace{2cm} \ldots )_{2^{k_3-k_1-(k_2-k_1)-1}})_{2^{k_2-k_1-1}}\\
&=0_{2^{n-k_2}}(0_{2^{n-k_3}}(\ldots (0_{2^{n-k_{r-1}}}(0_{2^{n-k_r}}1_{2^{n-k_r}})_{2^{k_r-k_{r-1}-1}})_{2^{k_{r-1}-k_{k-2}-1}})\ldots )_{2^{k_2-k_1-1}}
\end{align*}
This implies
\begin{align*}
T^n(&[k_2, \ldots ,k_{r+1}]) = \\
&0_{2^{n-k_2}}(0_{2^{n-k_2}}(\ldots (0_{2^{n-k_{r-1}}}(0_{2^{n-k_{r+1}}}1_{2^{n-k_{r+1}}})_{2^{k_{r+1}-k_r-1}})_{2^{k_r-k_{r-1}-1}})\ldots )_{2^{k_2-k_1-1}})_{2^{k_1}}
\end{align*}
When we multiply the above function by $x_{k_1}$ (to get our original monomial $x_{k_1}x_{k_2}\ldots x_{k_{r+1}}$), we see that the monomial can only be nonzero when $x_{k_1}$ is nonzero. In particular, this means that the truth table of $x_{k_1}x_{k_2}\ldots x_{k_{r+1}}$ is 0 for the first $2^{n-k_1}$ entries. So we have
\begin{align*}
T^n(&[k_1, \ldots, k_r,k_{r+1}]) = \\
&0_{2^{n-k_1}}(0_{2^{n-k_2}}(\ldots (0_{2^{n-k_{r-1}}}(0_{2^{n-k_{r+1}}}1_{2^{n-k_{r+1}}})_{2^{k_{r+1}-k_r-1}})_{2^{k_r-k_{r-1}-1}})\ldots )_{2^{k_2-k_1-1}})_{2^{k_1-1}}
\end{align*}
as desired.

\end{proof}
\begin{corollary}\label{quartTT1}
The truth table for $x_1x_ix_jx_k$ in $n$ variables is given by
$$
T^n([1,i,j,k]) = 0_{2^{n-1}}(0_{2^{n-i}}(0_{2^{n-j}}(0_{2^{n-k}}1_{2^{n-k}})_{2^{k-j-1}})_{2^{j-i-1}})_{2^{i-2}}
$$
\end{corollary}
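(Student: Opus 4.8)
The plan is to obtain this corollary as the immediate $m=4$ specialization of Lemma \ref{generalTT}, with no new work required. Take $(k_1,k_2,k_3,k_4)=(1,i,j,k)$, which is a legitimate instance of the lemma under the standing hypothesis of this section, since it amounts to assuming $1<i<j<k\le n$. For $m=4$ the general formula of Lemma \ref{generalTT} reads
$$0_{2^{n-k_1}}(0_{2^{n-k_2}}(0_{2^{n-k_3}}(0_{2^{n-k_4}}1_{2^{n-k_4}})_{2^{k_4-k_3-1}})_{2^{k_3-k_2-1}})_{2^{k_2-k_1-1}})_{2^{k_1-1}}.$$
Substituting the chosen indices turns the four nesting parameters into $2^{k-j-1}$, $2^{j-i-1}$, $2^{k_2-k_1-1}=2^{i-1-1}=2^{i-2}$, and $2^{k_1-1}=2^{1-1}=2^{0}$, in that order from the inside out.

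The only point that needs a word of comment is the outermost repetition count $2^{k_1-1}=2^{0}=1$: a block ``repeated once'' is simply the block itself, so the outermost layer of parentheses (the one carrying subscript $2^{0}$) collapses and we are left with exactly the displayed right-hand side
$$0_{2^{n-1}}(0_{2^{n-i}}(0_{2^{n-j}}(0_{2^{n-k}}1_{2^{n-k}})_{2^{k-j-1}})_{2^{j-i-1}})_{2^{i-2}}.$$
Since Lemma \ref{generalTT} is already proved in full generality, that completes the argument.

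There is essentially no obstacle here; the corollary is a bookkeeping exercise. The one thing to be careful about is the index shift in the second-innermost exponent, namely that $k_2-k_1-1 = i-2$ (not $i-1$), precisely because here $k_1=1$ rather than $0$. If one preferred a self-contained derivation that avoids citing the lemma, one could instead just run the single inductive step in the proof of Lemma \ref{generalTT} once with $r+1=4$; but this is unnecessary given that the lemma precedes the corollary.
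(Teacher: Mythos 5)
Your proposal is correct and is exactly the intended derivation: the paper states this corollary immediately after Lemma \ref{generalTT} with no separate proof precisely because it is the $m=4$, $(k_1,k_2,k_3,k_4)=(1,i,j,k)$ specialization, with the outermost repetition count $2^{k_1-1}=2^{0}=1$ collapsing as you note. Nothing further is needed.
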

To find the recursions for weights of quartic MRS functions, we begin by defining the quartic analog of $m$-actions:
\begin{definition}
For $v\in \mathbb{Z}$, $1\leq v \leq k$, we define an \emph{$\hat{m} _v $-action} on a truth table of a quartic Boolean function $f^{n-1}$ in $n-1$ variables as follows: Define $\hat{m} _v$ to be the sequence obtained by splitting the truth table of $x_1x_ix_jx_k$ into $2^{k-v+1}$ equally-sized portions, isolating the final portion, and then stretching it (by repeating each entry $2^{k-v}$ times) to a length of $2^{n-1}$. Then the $\hat{m}_v$-action applied to $f^{n-1}$ is
$$
T(\overset{\bar{m}_v}{f^{n-1}}) = T(f^{n-1}) \oplus \hat{m}_v
$$ 
where $T(f^{n-1})$ is the truth table of $f^{n-1}$. 
\end{definition}
\begin{lemma}\label{quadactions}
For $1\leq v\leq k$, the $\hat{m}$-actions are given by 

\begin{align*}
1.\hspace{.5pc} & \hat{m} = 1_{2^{n-1}} &v=1\\
2.\hspace{.5pc} & \hat{m} = (0_{2^{n-v}}1_{2^{n-v}})_{2^{v-2}}  & 1< v \leq k-j+1\\
3.\hspace{.5pc} & \hat{m} = (0_{2^{n+k-j-v}}(0_{2^{n-v}}1_{2^{n-v}})_{2^{k-j-1}})_{2^{v+j-k-2}}  & k-j+1< v \leq k-i+1\\
4.\hspace{.5pc} & \hat{m} = (0_{2^{n-i+k-v}}(0_{2^{n+k-j-v}}(0_{2^{n-v}}1_{2^{n-v}})_{2^{k-j-1}})_{2^{j-i-1}})_{2^{v+i-k-2}}  & k-i+1< v \leq k
\end{align*}
\end{lemma}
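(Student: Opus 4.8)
The plan is to deduce Lemma \ref{quadactions} directly from Corollary \ref{quartTT1}, exactly as Lemma \ref{quadraticactions} was deduced from Lemma \ref{quadTT}. Recall that by definition $\hat{m}_v$ is obtained from the truth table $T^n([1,i,j,k])$ by splitting it into $2^{k-v+1}$ equally-sized blocks, taking the last block, and stretching each of its entries $2^{k-v}$ times so that the result has length $2^{n-1}$. So the whole computation reduces to reading off the final $2^{n-1}/2^{k-v} = 2^{n-k+v-1}$ entries of $T^n([1,i,j,k])$ and then applying the stretch. By Corollary \ref{quartTT1},
$$
T^n([1,i,j,k]) = 0_{2^{n-1}}\bigl(0_{2^{n-i}}\bigl(0_{2^{n-j}}\bigl(0_{2^{n-k}}1_{2^{n-k}}\bigr)_{2^{k-j-1}}\bigr)_{2^{j-i-1}}\bigr)_{2^{i-2}},
$$
which is a string of $2^{i-2}$ identical blocks, each of length $2^{n-i+1}$, preceded by a leading all-zero string of length $2^{n-1}$; the innermost structure is a block of length $2^{n-j+1}$ consisting of $2^{j-i-1}$ copies of the pattern $(0_{2^{n-k}}1_{2^{n-k}})_{2^{k-j-1}}$.

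First I would split into the four cases according to how far back into $T^n([1,i,j,k])$ the tail of length $2^{n-k+v-1}$ reaches. Case $v=1$: the tail has length $2^{n-k}$, which (since $i\le j\le k$, so the final $1$-run is $1_{2^{n-k}}$) lies entirely inside the terminal run of $1$'s; stretching a constant-$1$ string of any length by any factor gives $1_{2^{n-1}}$. Case $1<v\le k-j+1$: here $n-k+v-1 \le n-j$, so the tail lies within the last ``period'' block of length $2^{n-j+1}$ but is long enough to capture some $(0_{2^{n-k}}1_{2^{n-k}})$ pairs; the tail is exactly $(0_{2^{n-k}}1_{2^{n-k}})_{2^{v-2}}$ on the scale $2^{n-k}$, and after stretching by $2^{k-v}$ each half-pair becomes a run of length $2^{n-v}$, giving $(0_{2^{n-v}}1_{2^{n-v}})_{2^{v-2}}$ — this is the same reasoning as in Lemma \ref{quadraticactions}, since for $v$ in this range the $\hat m$-action never ``sees'' the indices $i,j$. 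Cases $3$ and $4$ are the genuinely new ones: when $k-j+1 < v \le k-i+1$ the tail now reaches past the boundary of a length-$2^{n-j+1}$ block and picks up the $0_{2^{n-j}}$ prefixes that separate consecutive such blocks, producing the nested pattern $(0_{2^{n+k-j-v}}(0_{2^{n-v}}1_{2^{n-v}})_{2^{k-j-1}})_{2^{v+j-k-2}}$ after stretching; and when $k-i+1 < v \le k$ the tail additionally crosses boundaries of the length-$2^{n-i+1}$ blocks, pulling in the $0_{2^{n-i}}$ separators as well and giving the triply-nested expression in item 4. In each case it is a matter of bookkeeping: determine which run of $T^n([1,i,j,k])$ contains the starting index $2^n - 2^{n-k+v-1}$ of the tail, write the tail as a nested run-length expression on the scale $2^{n-k}$ (using the exponent arithmetic $k-j-1$, $j-i-1$, $v+j-k-2$, $v+i-k-2$ for the repetition counts, which are nonnegative precisely in the stated ranges of $v$), and multiply every run length by the stretch factor $2^{k-v}$.

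The main obstacle I anticipate is purely notational: verifying that the repetition exponents and run-length exponents come out exactly as written, i.e. that $2^{n+k-j-v}$, $2^{v+j-k-2}$, etc., are the correct lengths after accounting for where the cut point $2^n-2^{n-k+v-1}$ falls relative to the periodic structure, and that these exponents are nonnegative on the claimed intervals (for instance $v+j-k-2 \ge 0$ iff $v \ge k-j+2$, matching $k-j+1 < v$). There is no conceptual difficulty beyond Corollary \ref{quartTT1} and careful case analysis, so the proof in the paper will presumably just say that the result follows at once from Corollary \ref{quartTT1}, in parallel with the proof of Lemma \ref{quadraticactions}; I would write it out slightly more explicitly only to make the four-way split and the stretch step visible.
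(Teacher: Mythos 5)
Your proposal is correct and matches the paper's (essentially unstated) argument: the paper gives no explicit proof of this lemma, and proves the general analogue (Lemma \ref{muactions}) simply by asserting that it follows from the truth-table formula, exactly the deduction from Corollary \ref{quartTT1} that you carry out. Your case analysis and exponent bookkeeping check out, so your write-up is just a more explicit version of what the paper leaves to the reader.
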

We can extend this definition to the general case, that is, $m$-actions on a function of degree $k$ (called $\mu ^k$ actions below).
\begin{definition}[General $m$-actions]\label{genm}
For $v\in \mathbb{Z}$, $1\leq v \leq k$, we define an \emph{$\mu ^k_v $-action} on a truth table of a degree $k$ Boolean function $f^{n-1}$ in $n-1$ variables as follows: Define $\mu ^k _v$ to be the sequence obtained by splitting the truth table of $x_1x_{k_1}\ldots x_{k_r}$ into $2^{k_r-v+1}$ equally-sized portions, isolating the final portion, and then stretching it (by repeating each entry $2^{k_r-v}$ times) to a length of $2^{n-1}$. Then the $\mu ^k_v$-action applied to $f^{n-1}$ is
$$
T(\overset{\mu^k_v}{f^{n-1}}) = T(f^{n-1}) \oplus \mu^k_v
$$ 
where $T(f^{n-1})$ is the truth table of $f^{n-1}$. 
\end{definition}
\begin{align*}
T^n(&[k_1, \ldots, k_m]) = \\
&0_{2^{n-k}}(0_{2^{n-k_2}}(\ldots (0_{2^{n-k_{r-1}}}(0_{2^{n-k_r}}1_{2^{n-k_r}})_{2^{k_r-k_{r-1}-1}})_{2^{k_{r-1}-k_{r-2}-1}})\ldots )_{2^{k_2-k_1-1}})_{2^{k_1-1}}
\end{align*}
\begin{lemma}\label{muactions}
The for $1\leq v\leq k_r$, $\mu ^k$-actions are given by 

\begin{align*}
1.\hspace{.5pc} & \mu ^k = 1_{2^{n-1}} & v=1\\
2.\hspace{.5pc} & \mu ^k= (0_{2^{n-v}}1_{2^{n-v}})_{2^{v-2}} & 1< v \leq k_r-k_{r-1}+1 \\
3.\hspace{.5pc} & \mu ^k= (0_{2^{n+k_r-k_{r-1}-v}}(0_{2^{n-v}}1_{2^{n-v}})_{2^{k_r-k_{r-1}-1}})_{2^{v-k_r+k_{r-1}-2}} & k_r-k_{r-1}+1< v \leq k_r-k_{r-2}+1 \\
& \vdots &\vdots \qquad \qquad
\end{align*}
In general, for $0\leq j<k$ and $k_r - k_{k-j} +1 < v \leq k_r -k_{k-j-1} +1$ (recall $k_0 =1$), we have
$$
\mu ^k = (0_{2^{n+k_r-k_{k-j}-v}}(0_{2^{n+k_r-k_{k-j+1}-v}}(\ldots (0_{2^{n-v}}1_{2^{n-v}})_{2^{k_r-k_{r-1}-1}})_{2^{k_{r-1}-k_{r-2}-1}})\ldots)_{2^{v-k_r+k_{k-j}-2}}
$$
\end{lemma}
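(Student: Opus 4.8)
\medskip
\noindent\textbf{Proof strategy.} The plan is to read the shape of $\mu^k_v$ straight off the nested form of the truth table of the generating monomial, since by Definition \ref{genm} $\mu^k_v$ is nothing but a suffix of that truth table, scaled up by a power of two. First I would apply Lemma \ref{generalTT} to the monomial $x_1x_{k_1}\cdots x_{k_r}$, using the convention $k_0:=1$ so that the leading variable is the level-$0$ factor: setting $P_{r+1}:=1_{2^{n-k_r}}$ and $P_i:=\bigl(0_{2^{n-k_i}}P_{i+1}\bigr)_{2^{k_i-k_{i-1}-1}}$ for $1\le i\le r$, the truth table in $n$ variables is $T^n=0_{2^{n-1}}\|P_1$. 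A one-line induction telescoping the exponents gives $|P_i|=2^{n-k_{i-1}}$, so the unrepeated unit $0_{2^{n-k_i}}P_{i+1}$ at ``level'' $i$ has length $2^{\,n-k_i+1}$, and these units nest so as to exhaust $P_1$.

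The second step locates the ``final portion'' inside this hierarchy. By Definition \ref{genm} the last of the $2^{\,k_r-v+1}$ equal pieces of $T^n$ has length $\ell_v:=2^{\,n-1-k_r+v}$. For $v=1$ this is $\ell_1=2^{\,n-k_r}$, the second half of the innermost unit, namely $1_{2^{n-k_r}}$. For $v\ge 2$ I would check that $k_r-k_i+1<v\le k_r-k_{i-1}+1$ is equivalent to $2^{\,n-k_i+1}\le\ell_v\le|P_i|$; since every length in sight is a power of two, it follows that the last $\ell_v$ bits of $T^n$ form an \emph{integral} number of whole copies of the level-$i$ unit, namely $\bigl(0_{2^{n-k_i}}P_{i+1}\bigr)_{2^{\,v-k_r+k_i-2}}$, the suffix starting exactly on a unit boundary. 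As $v$ grows, $i$ decreases, so the suffix picks up more and more of the outer $0$-prefixes.

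The third step is the stretch. Repeating each bit $2^{\,k_r-v}$ times multiplies every run length occurring in the suffix by $2^{\,k_r-v}$ and leaves every repetition count unchanged: $1_{2^{n-k_r}}$ becomes $1_{2^{n-v}}$, $0_{2^{n-k_r}}$ becomes $0_{2^{n-v}}$, and a level-$j$ prefix $0_{2^{n-k_j}}$ becomes $0_{2^{n+k_r-k_j-v}}$. Feeding the three cases of the second step through this operation gives exactly formula $1$, formula $2$ (the case $i=r$), and---on writing $i=k-j$---the displayed general formula; specialising to a quartic monomial ($r=3$) recovers cases $1$--$4$ of Lemma \ref{quadactions}, and specialising to a quadratic monomial recovers Lemma \ref{quadraticactions}, which is a useful consistency check.

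The only delicate point, and the one I expect to be the real obstacle, is the boundary bookkeeping of the second step: one must make sure that $\ell_v$ is always a multiple of the unit length $2^{\,n-k_i+1}$ and never exceeds $|P_i|=2^{\,n-k_{i-1}}$---this is exactly what pins down the half-open ranges $k_r-k_i+1<v\le k_r-k_{i-1}+1$ and the repetition exponent $v-k_r+k_i-2$---and one should check the transition values $v=k_r-k_i+1$, where the suffix happens to be all of $P_{i+1}$, to confirm that adjacent cases agree there. Keeping the two index conventions (the list $[1,k_1,\ldots,k_r]$ used in Lemma \ref{generalTT} versus the shifted list $x_1x_{k_1}\cdots x_{k_r}$ with $k_0=1$ used in Definition \ref{genm}) straight is the other thing to watch. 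Everything else is routine manipulation of the $0_a1_b(\cdots)_c$ notation.
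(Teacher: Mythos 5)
Your proposal is correct and follows essentially the same route as the paper, whose entire proof of this lemma is the single sentence ``This follows from Lemma \ref{generalTT}.'' You have simply filled in the details of exactly that derivation — reading the nested run-length structure off Lemma \ref{generalTT}, isolating the length-$2^{n-1-k_r+v}$ suffix as an integral number of level-$i$ units, and tracking how the stretch by $2^{k_r-v}$ rescales run lengths while preserving repetition counts — so no further comparison is needed.
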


\begin{proof}
This follows from Lemma \ref{generalTT}.
\end{proof}
The next step in deriving the recursions is determining how the $m$-actions split. We will start with the quartic case and then generalize. 
\begin{lemma}\label{4splits}
For any quartic $\hat{m}$-action, $\hat{m}_v$ where $k<n$ and $1\leq v \leq k$, we can split $\hat{m}_v$ into two $\hat{m}$-actions, $\hat{m}_{v1}$ and $\hat{m}_{v2}$ such that $\hat{m}_{v1}$ acts on the first half of a truth table
$T(b^{n-1})$ of a Boolean function in $n-1$ variables and $\hat{m}_{v2}$ acts on the second half as follows: \begin{align*}
1.\hspace{.5pc} & \overset{\hat{m}_v^n}{T(b^{n-1})} = \overset{\hat{m}_v^{n-1}}{T^1(b^{n-1})}\| \overset{\hat{m}_v^{n-1}}{T^2(b^{n-1})} &v=1\\
2.\hspace{.5pc} & \overset{\hat{m}_v^n}{T(b^{n-1})} = T^1(b^{n-1})\| \overset{\hat{m}_{v-1}^{n-1}}{T^2(b^{n-1})} &v=2, \\
&&v=k-j+2, \text{ or }\\
&& v=k-i+2\\
3.\hspace{.5pc} & \overset{\tilde{m}_v^n}{T(b^{n-1})} = \overset{\tilde{m}_{v-1}^{n-1}}{T^1(b^{n-1})}\| \overset{\tilde{m}_{v-1}^{n-1}}{T^2(b^{n-1})} &otherwise\\
\end{align*}
where $T^1(b^{n-1})$ is the first half of $T(b^{n-1})$ and $T^2(b^{n-1})$ is the second half.
\end{lemma}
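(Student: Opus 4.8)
The plan is to prove the lemma by a direct case analysis that parallels the proof of Lemma~\ref{quadsplits} and of \cite[Lemma 5]{BC12}: read off the explicit form of each quartic $\hat{m}$-action from Lemma~\ref{quadactions}, cut the resulting length-$2^{n-1}$ binary string exactly in half, and identify each half. The one structural fact needed at the outset is that an $\hat{m}_v$-action is componentwise XOR, so if we write the action string as $\hat{m}_v^n = P \,\|\, Q$ with $|P| = |Q| = 2^{n-2}$, then
\[
T(b^{n-1}) \oplus \hat{m}_v^n \;=\; \bigl(T^1(b^{n-1}) \oplus P\bigr) \,\|\, \bigl(T^2(b^{n-1}) \oplus Q\bigr),
\]
and it only remains to recognize each of $P$ and $Q$ either as the all-zero string of length $2^{n-2}$ (``no action'') or as the string of some $\hat{m}$-action in $n-1$ variables, again using Lemma~\ref{quadactions}.

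First I would dispose of $v=1$ separately: $\hat{m}_1^n = 1_{2^{n-1}}$ splits as $1_{2^{n-2}} \,\|\, 1_{2^{n-2}}$ and $1_{2^{n-2}} = \hat{m}_1^{n-1}$, giving Case~1. Next, the key observation is that the three distinguished values $v = 2$, $v = k-j+2$, $v = k-i+2$ are precisely the values for which $v-1$ is an upper endpoint of one of the sub-ranges in Lemma~\ref{quadactions}, that is, $v-1 \in \{1,\, k-j+1,\, k-i+1\}$. For such $v$ the outermost repetition exponent in the formula for $\hat{m}_v^n$ (which is $v-2$, $v+j-k-2$, or $v+i-k-2$ respectively) equals $0$, so the action string collapses to $0_{2^{n-2}}$ followed by a single ``core'' block; substituting $n \mapsto n-1$ and $v \mapsto v-1$ into the relevant clause of Lemma~\ref{quadactions} reproduces that core block for block, so $P$ is the all-zero string and $Q = \hat{m}_{v-1}^{n-1}$, which is Case~2. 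For every remaining $v$ with $2 < v \le k$ the outermost repetition exponent is positive, so halving the string merely decrements that exponent while leaving every inner block untouched; matching this against Lemma~\ref{quadactions} under the same substitution $n \mapsto n-1$, $v \mapsto v-1$ shows $P = Q = \hat{m}_{v-1}^{n-1}$, which is Case~3.

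I expect the only real work to be bookkeeping: keeping the (up to three) nested exponents in clauses~2--4 of Lemma~\ref{quadactions} aligned under the reindexing, and checking the standing hypotheses $1 < i < j < k < n$ together with $v \le k$, so that every exponent that appears is nonnegative, the three distinguished values $2,\, k-j+2,\, k-i+2$ are pairwise distinct and lie in $\{3,\dots,k\}$, and the resulting index $v-1$ always lies in the legal range $\{1,\dots,k\}$ for an action in $n-1$ variables. There is no conceptual obstacle: the statement is the exact quartic analog of Lemma~\ref{quadsplits} with one extra layer of nesting, and the same verification scheme carries over, gaining one further layer at each step, to the general $\mu^k$-actions of Definition~\ref{genm} needed to run the method of Section~\ref{subsec2.2} for arbitrary rotation symmetric functions.
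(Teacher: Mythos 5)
Your proposal is correct and follows exactly the route the paper intends: the paper states Lemma~\ref{4splits} without proof, but (as with Lemma~\ref{quadsplits}, whose proof is ``this easily follows from Lemma~\ref{quadraticactions}'') the splitting rules are meant to be read off directly from the explicit action strings in Lemma~\ref{quadactions} by halving, observing that the distinguished values $v=2$, $v=k-j+2$, $v=k-i+2$ are precisely those for which the outermost repetition exponent vanishes, which is what you do. The only nit is the phrasing that the three distinguished values ``lie in $\{3,\dots,k\}$'' (the value $2$ does not), but this does not affect the argument.
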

The general splits are along the same lines.
\begin{lemma}\label{gensplits}
For any $\mu_{k,v}$-action, $\mu_{k,v}$ where $k_r<n$ and $1\leq v \leq k_r$, we can split $\mu_{k,v}$ into two $\mu_{k,v}$-actions, $\mu_{k,v1}$ and $\mu_{k,v2}$ such that $\mu_{k,v1}$ acts on the first half of a truth table 
$T(b^{n-1})$ of a Boolean function in $n-1$ variables and $\mu_{k,v2}$ acts on the second half as follows: 
\begin{align*}
1.\hspace{.5pc} & \overset{\mu_{k,v}^n}{T(b^{n-1})} = \overset{\mu_{k,v}^{n-1}}{T^1(b^{n-1})}\| \overset{\mu_{k,v}^{n-1}}{T^2(b^{n-1})} &v=1\\
2.\hspace{.5pc} & \overset{\mu_{k,v}^n}{T(b^{n-1})} = T^1(b^{n-1})\| \overset{\mu_{k,v-1}^{n-1}}{T^2(b^{n-1})} &v=2 \text{ or} \\
&&v=k_r-k_{k-j}+2\\
&&\text{for some }0<j<k\\
3.\hspace{.5pc} & \overset{\tilde{m}_v^n}{T(b^{n-1})} = \overset{\tilde{m}_{v-1}^{n-1}}{T^1(b^{n-1})}\| \overset{\tilde{m}_{v-1}^{n-1}}{T^2(b^{n-1})} &otherwise\\
\end{align*}
where $T^1(b^{n-1})$ is the first half of $T(b^{n-1})$ and $T^2(b^{n-1})$ is the second half.
\end{lemma}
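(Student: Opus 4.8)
The plan is to prove Lemma~\ref{gensplits} in exactly the style of Lemmas~\ref{quadsplits} and~\ref{4splits}: invoke the closed form of a $\mu_{k,v}$-action supplied by Lemma~\ref{muactions}, cut the resulting bit string in half, and recognise each half. By Definition~\ref{genm} a $\mu^{n}_{k,v}$-action is a fixed binary string $\mu^{n}_{k,v}$ of length $2^{n-1}$, and all of the monomial data $k_1<\cdots<k_r$ is frozen; ``splitting'' it merely means partitioning it into its first $2^{n-2}$ entries, which get XORed onto the first half $T^1(b^{n-1})$ of a length-$2^{n-1}$ truth table, and its last $2^{n-2}$ entries, XORed onto $T^2(b^{n-1})$. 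So the whole lemma reduces to the combinatorial identity ``each half of $\mu^{n}_{k,v}$, read as an action on a function in $n-1$ variables, equals $\mu^{n-1}_{k,v'}$ for the stated $v'$ (or is the all-zero string, i.e.\ no action)'', which I would establish by a case analysis on the position of $v$ among the ranges appearing in Lemma~\ref{muactions}.

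First I would dispose of $v=1$: here $\mu^{n}_{k,1}=1_{2^{n-1}}$, its two halves are $1_{2^{n-2}}\|1_{2^{n-2}}$, and $1_{2^{n-2}}$ is precisely the $v=1$ action in $n-1$ variables, which gives case~1 (parameter unchanged). For the ``interior'' values of $v$ --- those lying in a range $k_r-k_{k-j}+1<v\le k_r-k_{k-j-1}+1$ but at least two above its (excluded) left endpoint --- the outermost repetition count $2^{\,v-k_r+k_{k-j}-2}$ in the formula of Lemma~\ref{muactions} is even, so halving the total length $2^{n-1}$ simply halves that count; the resulting string in $n-1$ variables is the same nested expression with every outer $0$-block length $2^{n-(\cdot)}$ uniformly decremented to $2^{(n-1)-(\cdot)}$ and the inner repetition exponents $2^{k_i-k_{i-1}-1}$ unchanged, which one reads off as $\mu^{n-1}_{k,v-1}$ (and $v-1$ still lies in the same range), giving case~3. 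Finally, at the threshold values $v=2$ and $v=k_r-k_{k-j}+2$ --- the smallest element of a range --- the outermost block $0_{2^{\,n+k_r-k_{k-j}-v}}$ has length exactly $2^{n-2}$ and the outermost repetition count collapses to $2^{0}=1$, so the first half of $\mu^{n}_{k,v}$ is all zeros, i.e.\ no action on $T^1(b^{n-1})$, while stripping that leading $0$-run leaves precisely the next level of nesting; reindexed for $n-1$ variables this is $\mu^{n-1}_{k,v-1}$, where now $v-1$ falls at the \emph{right} endpoint of the previous range and the two formulas match on the nose (one verifies this for degree $4$ exactly as in the discussion preceding Lemma~\ref{4splits}). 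This is case~2. The hypothesis $k_r<n$ guarantees that the leading $0$-run is genuinely shorter than the whole string, so these halvings are well defined, and degenerate inner blocks with repetition exponent $2^{0}$ cause no trouble.

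The computations in all three cases are elementary but notation-heavy, and the main obstacle is exactly this bookkeeping at an arbitrary nesting depth: one must check that at each threshold $v=k_r-k_{k-j}+2$ the leading $0$-run has length precisely half the string, and then that after deleting it the remaining nested expression --- with all $0$-block exponents $n-k_i$ decremented to $(n-1)-k_i$ and the inner repetition exponents $k_i-k_{i-1}-1$ left alone --- coincides term by term with the closed form of $\mu^{n-1}_{k,v-1}$. Once the analogues for $2$ and $4$ variables (Lemmas~\ref{quadsplits} and~\ref{4splits}) are in hand, the only genuinely new ingredient is handling an arbitrary number of nesting levels, which is a routine induction on that level (equivalently on $j$). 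As with the earlier splitting lemmas, the proof then ``follows from Lemma~\ref{muactions}, and hence ultimately from Lemma~\ref{generalTT}''.
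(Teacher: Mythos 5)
Your proposal is correct and matches the approach the paper intends: the paper states Lemma~\ref{gensplits} (like Lemma~\ref{4splits}) with no proof at all, implicitly relying on exactly the argument you give --- take the closed form of $\mu^{n}_{k,v}$ from Lemma~\ref{muactions}, cut it in half, and identify each half as some $\mu^{n-1}_{k,v'}$ or as the zero string by a case analysis on where $v$ sits relative to the thresholds $v=2$ and $v=k_r-k_{k-j}+2$. Your write-up (in particular the observations that at a threshold the leading zero run has length exactly $2^{n-2}$, and that for interior $v$ both $n$ and $v$ decrement so the repeated unit is literally unchanged) is sound and considerably more detailed than anything the paper provides.
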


\section{Mathematica program for weight recursions}
\label{prog}

Below are the lines of Mathematica code in a program which computes the recursion polynomial (as in Example \ref{ex1}) for the sequence of weights $w_k = wt(f_k)$ for any rotation symmetric functions  $f_k$ in $k$ variables defined by adding $n$ fixed MRS functions of varying degrees.   Of course $w_k$ is only defined if $k$ is at least as large as the minimal degree of the MRS functions, but $w_k$ could be defined by working the recursion backwards even for smaller values of $k \geq 0.$  It turns out to be useful to use this idea in \cite{C13}.    

This program [January 2017 version] computes the recursion polynomial for the sequence of Hamming weights $wt(f_m)$ where $f_m$ is the sequence, as $m$ increases, of rotation symmetric (RS) functions in $m$ variables generated by any given sum of $n$ monomial RS functions with arbitrary degrees. The program uses an extension of the theory (see \cite{BC12}) for the case of a single RS function. The Òrules matrixÓ constructed in the program is explained in 
\cite[pp. 112-115]{BC12}, where the matrix is labelled $A.$ The program below has many comments to explain what is being done at various points. After the recursion polynomial is found, there is another program which can be used to find $wt(f_m)$ for small $m$ in order to obtain initial conditions for the recursion.  The program was first written by Cusick's Ph. D. student Bryan Johns in 2013. Note that the program below must be entered into Mathematica software to produce a .nb file which can then be run. \\
\subsection{Mathematica code} \label{4.1}
Clear[rules];\\

(* ENTER "n = number of functions that occur in f;" *)\\
n = 3;\\

(* ENTER the list of generating monomials for each of the monomial functions that occur in f using the form \{1,x2,x3,...,xn\} if the generating monomial is $[1,x2,x3,...,xn].$ Note all monomials MUST begin with 1. The list is named fcns so a sample list is "fcns = \{ \{1,2,3,4\},\{1,2,5\},\{1,4\} \};Ó *)
\\  
fcns = \{ \{1, 2, 6\}, \{1, 2\}, \{1, 6\} \};\\

(* The main program begins here. *)\\
ctr = 0;\\
ctr2 = 0;\\
error = 0;\\

(* checking the form of the generating monomials *)\\
For[i = 1, i $\leq$ n, i++, 
  If[fcns[ [i] ][ [1] ] == 1 \&\& VectorQ[fcns [ [i] ] ], , \\
   MessageDialog[
    "All generating monomials must be entered in the correct form, beginning with 1"]; error += 1] ];\\

(* checking for the case of linear function only *)\\
If[n == 1 \&\& fcns == \{ \{1\} \}, \\
  Print["The recursion for the linear RS function x1+x2+...+xn is 2"];\\
   error += 1];\\
fcns2 = fcns;\\

(* checking for the presence of linear and quadratic functions *)\\
For[i = 1, i $\leq$ n, i++, If[Length[fcns[ [i] ] ] == 2, ctr2 += 1] ];\\
For[i = 1, i $\leq$ n, i++, If[Length[fcns[ [i] ] ] == 1, ctr += 1; fcns = Drop[fcns, \{i\}]; 
   n = n - 1] ];\\

(* this begins the construction of the rules matrix *)\\
If[error == 0,\\
 Rs = 0;\\
 For[i = 1, i $\leq$ n, i++, Rs += fcns[ [i] ][ [-1] ] ];\\
 rules = ConstantArray[0, \{2$\symbol{94}$(Rs - n) + 1, 2$\symbol{94}$(Rs - n) + 1\}];\\
 For[j = 1, j $\leq$ 2$\symbol{94}$(Rs - n), j++,\\
  lft = \{ \};\\
  rt =\{ \};\\
  testcase = 1;\\
  bits = IntegerDigits[j - 1, 2, Rs - n];\\
  start = 1;\\
  end = fcns[ [1] ][ [-1] ];\\
  For[k = 1, k $\leq$ n, k++,\\
   ik = fcns[ [k] ][ [-1] ];\\
   fkb = FromDigits[bits[ [start ;; end - 1] ], 2];\\
   If[k == n, ,\\
    start = end;\\
    end = end + fcns[ [k + 1] ][ [-1] ] - 1];\\
   
(* the left side of the rules matrix is computed *)\\
   fkbits = IntegerDigits[fkb, 2, ik];\\
   If[fkb == 2$\symbol{94}$(ik - 1), fkbits = 2*fkb];\\
   If[fkbits[ [2] ] == 1, lftfk = 2*fkb - 2$\symbol{94}$(ik - 1), lftfk = 2*fkb];\\
   If[Length[fcns[ [k] ] ] $>$ 2, For[i = 2, i $<$ Length[fcns[ [k] ] ], i++,\\
     If[fkbits[ [ik - fcns[ [k] ][ [i] ] + 2] ] == 1, 
      lftfk = lftfk - 2$\symbol{94}$(fcns[ [k] ][ [i] ] - 1)] ] ];\\
   
(* the right side of the rules matrix is computed *)\\
   Which[fkb $<$ 2$\symbol{94}$(ik - 2), rtfk = 2*fkb + 1, 
    2$\symbol{94}$(ik - 2) $\leq$ fkb $<$ 2$\symbol{94}$(ik - 1), rtfk = -(2*fkb + 1 - 2$\symbol{94}$(ik - 1)), 
    fkb = 2$\symbol{94}$(ik - 1), rtfk = 2*fkb];\\
   fklft = IntegerDigits[lftfk, 2, ik - 1];\\
   lft = Join[lft, fklft];\\
   leftop = FromDigits[lft, 2];\\
   fkrt = IntegerDigits[Abs[rtfk], 2, ik - 1];\\
   rt = Join[rt, fkrt];\\
   rtop = FromDigits[rt, 2];\\
   testcase = testcase*(rtfk + .25)];\\
  
(* compiling the rules matrix, beginning with case where there are no linear functions present *)\\
   If[ctr == 0,\\
   If[testcase $>$= 0, rules[ [rtop + 1] ][ [j] ] = 1, 
    rules[ [rtop + 1, j] ] = -1;\\
    rules[ [-1] ][ [j] ] = 1];\\
   rules[ [leftop + 1] ][ [j] ] = 1;\\
   Clear[lftfk],\\
   If[testcase $<$ 0, rules[ [rtop + 1] ][ [j] ] = 1, 
    rules[ [rtop + 1, j] ] = -1;\\
    rules[ [-1] ][ [j] ] = 1];\\
   rules[ [leftop + 1] ][ [j] ] = 1;\\
   Clear[lftfk]
   ] ];\\

 (* This MatrixMinimalPolynomial code was taken from the Wolfram website at 
    http://mathworld.wolfram.com/MatrixMinimalPolynomial.html *)\\
  MatrixMinimalPolynomial[a\_List?MatrixQ, x\_] := 
  Module[\{i, n = 1, qu =\{ \}, 
    mnm = \{Flatten[IdentityMatrix[ Length[a] ] ]\} \},
   While[Length[qu] == 0,
    AppendTo[mnm, Flatten[ MatrixPower[a, n] ]
     ];\\
    qu = NullSpace[ Transpose[mnm] ];\\
    n++;\\
    ];\\
   First[qu].Table[x$\symbol{94}$i, \{i, 0, n - 1\}]\\
   ];\\
 rules[ [-1] ][ [-1] ] = 2;\\
 j = Length[rules];\\
 For[i = 1, i $\leq$ j, i++, 
  If[ rules[ [i] ] == 
    ConstantArray[0, Length[rules[ [i] ] ] ], \{rules = 
     Drop[rules, \{i, i\}, \{i, i\}], j = Length[rules], i = 1\}] ];\\
 If[ctr == 0,\\
  Print["Minimal Polynomial of the rules matrix for the sum of ", fcns, ":"],\\
  Print["Minimal Polynomial of the rules matrix for the sum of ",fcns, "+ $\Sigma x_i$:"]\\
   ];\\
 minpoly = MatrixMinimalPolynomial[rules, x];\\
 Print[minpoly];\\

 (*  collecting information for the computation of weights using the recursion computed above *)\\
 minpoly2 = minpoly;\\
 While[ CoefficientList[minpoly2, x][ [1] ] == 0,  minpoly2 = 1/x*minpoly2];\\
 minpoly2 = Simplify[minpoly2];\\
 degminpoly = Exponent[minpoly2, x];\\
 kerback = CoefficientList[minpoly2, x];\\ 
 kerback = Drop[kerback, -1];\\
 ker = Reverse[kerback];\\
 ker = -1*ker;\\
 ivals = \{ \};\\
 large = 1;\\
 
(* Next we remove extra powers of $x$ in the minimal polynomial *)\\ 
 Print["Which reduces to ", minpoly2];\\  
 ]\\
(* The main program ends here and an output is printed. *)\\

(* The next three lines are output from the main program. *)\\
Minimal Polynomial of the rules matrix for the sum of \{ \{1,2,6\},\{1,2\},\{1,6\} \}:

$$-8 x^9+4 x^{10}+4 x^{11}+2 x^{12}-2 x^{13}-2 x^{14}+x^{15}$$

Which reduces to $-8+4 x+4 x^2+2 x^3-2 x^4-2 x^5+x^6$

(* Below is a program which uses results from the main program to compute weights for the function. *)\\

(* If you want to compute some weights of this function using the recursion, ENTER "yorn = 1;" and ENTER "m=number of weights to be computed;" If not ENTER "yorn=0;" or simply skip the rest of this program. *)\\
yorn = 1;\\
m = 12;\\

(* The weights program begins here. It can take a lot of time if many weights are computed. *)\\
If[yorn == 1, \\ 
 For[j = 1, j $\leq$ Length[fcns2], j++,
  If[fcns2[ [j] ][ [-1] ] $>$= large, large = fcns2[ [j] ][ [-1] ] ] ];\\
 For[i = 1, i $\leq$ Min[m, degminpoly], i++,\\
  n = i + large;\\
  fn[z\_\_] := Mod[Sum[Mod[Sum[\\
       Product[z[ [Mod[(j - 1) + k, n, 1] ]], \{j, fcns2[ [i] ]\} ], \{k, n\}], 2], \{i, Length[fcns2]\}], 2];\\
  wt = Sum[ fn[IntegerDigits[j, 2, n] ], \{j, 0, 2$\symbol{94}$n - 1\} ];\\
  ivals = Insert[ivals, wt, -1];\\
  ];\\
 If[Length[ivals] $>$= degminpoly, \\
  Tvals = LinearRecurrence[ker, ivals, m], Tvals = ivals];\\
 
(* If there are quadratic functions x1xa some may be "short;" this happens when n=2a-2. We     
    recompute these weights below. *)\\
 If[ctr2 $>$ 0,
  fcns3 = fcns2;\\
  For[i = 1, i $\leq$ Length[fcns2], i++,\\
   If[Length[fcns2[ [i] ] ] == 2 \&\& 
     1 + large $\leq$ 2*(fcns2[ [i] ][ [-1] ] - 1) $\leq$ m + large,\\
    r = fcns2[ [i] ][ [2] ];\\
    fcns3 = Drop[fcns3, \{i\}];\\
    n = 2*(r - 1);\\
    fn[z\_\_] := Mod[Sum[Mod[Sum[\\
         Product[z[ [Mod[(j - 1) + k, n, 1] ]], \{j, fcns3[ [a] ]\} ],\{k, n\}],2], \{a, Length[fcns3]\}], 2];\\
    h2[z\_\_] := 
     Mod[Sum[z[ [b] ]*z[ [Mod[r - 1 + b, n/2, 1] ] ], \{b, Length[z]\}], 2];\\
    hs[z\_\_] := Mod[fn[z] + h2[z], 2];\\
    wt2 = Sum[hs[IntegerDigits[j, 2, n] ], \{j, 0, 2$\symbol{94}$n - 1\}];\\
    Tvals = ReplacePart[Tvals, (n - large) -$>$ wt2];\\
    fcns3 = fcns2;\\
    	] ] ];\\
 
\noindent Print["The first ", m, " weights (starting with n=", large + 1, ") are ", Tvals];\\
 ] \\
(* The weights program ends here and an output is printed *)\\

(* Below is output from the weights program *)\\
The first 12 weights (starting with n=7) are\\ \{64,112,244,496,1024,1960,4096,8064,16336,32512,65536,130464\}

\subsection{Comments on the code}
The code in Section \ref{4.1} contains a special part which deals with the so-called {\em short} quadratic MRS functions in $n$ variables generated by the monomial $x_1x_a$ when $n=2a-2.$
The name comes from the fact that these MRS functions contain only $n/2$ monomials, instead of the usual $n.$ See \cite[Remark 10, p. 431]{Kim09} for the special properties of these short functions.  Without the special portion of code, the part of the program in Section \ref{4.1} which computes weights would give a wrong answer for $n = 2a-2$ whenever a short quadratic function was present in the list $fcns.$ Note that the code in Section \ref{4.1} has the short function generated by $x_1x_6$ in $fcns,$ so the weight computation for $n = 10$ uses the special part of the code.

Short functions also occur for certain values of $n$ in MRS functions of higher degree, and the number of kinds of short function increases as the degree increases.  A discussion of the short MRS functions of degrees $3$ and $4$ is given in \cite[pp. 5070-5071]{CuIS11} and 
\cite[Lemma 1.2, p. 193]{CuCh}, respectively.  The presence of higher degree short functions in the list $fcns$ will generate some incorrect weights as in the quadratic case, unless some special code is inserted to adjust for this. The sedulous reader can easily compose this code, or simply compute the correct weights for the problematic values of $n$ separately.  

The recursions computed by the Mathematica program may have large order even for relatively small values of the input parameters. For example, the order of the recursion for the weights of the cubic MRS functions $(1,3,11)_n$ is $145.$ For the simplest case of weights of cubic MRS functions $(1,r,s)_n,$ it is posible to get some general results on the values for the orders of the recursions for the weights \cite{CJ15}.  If one wanted to actually use a recursion of large order for the weights, it might be infeasible to compute the initial conditions for the recursion by examining the truth tables. For example, with the computers available now we could not count the $1's$ in a truth table of size $2^{145}.$  Perhaps surprisingly, for the case of cubic MRS functions it is possible to find the initial conditions for any recursion of weights, no matter how large the order of the recursion, if we can compute the roots of the recursion polynomial to a sufficient accuracy \cite{C13}.


\begin{thebibliography}{20}

\bibitem{BCP}
M. L. Bileschi, T. W. Cusick and D. Padgett,
Weights of Boolean cubic monomial rotation symmetric functions,
{\em Cryptogr. Commun.} 4 (2012), 105--130.

\bibitem{BC12}
A. Brown and T. W. Cusick,
Recursive weights for some Boolean functions,
{\em J. Math. Cryptol.} 6 (2012), 105--135.

\bibitem{CuIS11}
T. W. Cusick,
Affine equivalence of cubic homogeneous rotation symmetric functions,
{\em Inform. Sci.} 181 (2011), 5067--5083.

\bibitem{C13}
T. W. Cusick, Finding Hamming weights without looking at truth tables,
{\em Cryptogr. Commun.} 5 (2013), 7--18.

\bibitem{CuCh}
T. W. Cusick and Y. Cheon,
Affine equivalence of quartic homogeneous rotation symmetric Boolean functions,
{\em Inform. Sci.} 259 (2014), 192--211.

\bibitem{CJ15}
T. W. Cusick and B. Johns,
Recursion orders for weights of Boolean cubic rotation symmetric functions,
{\em Discr. Appl. Math.} 186 (2015), 1--6.

\bibitem{CS02}
T. W. Cusick and P. St\u{a}nic\u{a}, Fast evaluation, weights and nonlinearity of
rotation symmetric functions, {\em Discrete Mathematics} 258 (2002), 289-301.




\bibitem{CLS09}
T. W. Cusick, Y. Li and P. St\u anic\u a,
On a conjecture for balanced symmetric Boolean functions,
{\em  J. Math. Cryptol.}   3 (2009), 273--290.

\bibitem{CS}
T. W. Cusick and P. St\u{a}nic\u{a}, Fast evaluation, weights and nonlinearity of
rotation symmetric functions, {\em Discr. Math.} 258 (2002), 289-301.


\bibitem{CBF}
T. W. Cusick and P. St\u{a}nic\u{a}, {\em Cryptographic Boolean Functions and Applications}, second ed. (San Diego: Academic Press, 2017 to appear).  First edition 2009.

\bibitem{Kim09}
H. Kim, S.-M. Park and S. G. Hahn,
On the weight and nonlinearity of homogeneous rotation symmetric Boolean functions of degree $2$,
{\em Discr. Appl. Math.} 157 (2009), 428--432.

\bibitem{PQ}
J. Pieprzyk and C. X. Qu, Fast hashing and rotation-symmetric functions,
{\em J. Univers. Comput. Sci.} 5 (1), 20--31 (1999).
\end{thebibliography}
\end{document}